\author{Janin Heuer}
\address{Janin Heuer, Technische Universit\"at Braunschweig, Institut f\"ur Analysis und Algebra, AG Algebra, Universit\"atsplatz 2, 38106 Braunschweig,
 Germany\medskip}
\email{janin.heuer@tu-braunschweig.de}
\author{Ngoc Mai Tran}
\address{Ngoc Mai Tran, Department of Mathematics, The University of Texas at Austin, Speedway 2515 Stop C1200, Austin TX 78712, USA\medskip}
\email{ntran@math.utexas.edu}
\author{Timo de Wolff}
\address{Timo de Wolff, Technische Universit\"at Braunschweig, Institut f\"ur Analysis und Algebra, AG Algebra, Universit\"atsplatz 2, 38106 Braunschweig,
 Germany\medskip}
\email{t.de-wolff@tu-braunschweig.de}
\subjclass[2010]{12D15, 13J30, 14P99, 20B30, 26D15, 90C23}
\keywords{certificate of nonnegativity, circuit polynomial, SONC, symmetric group, Muirhead inequality, symmetry}
\title[A Generalized Muirhead Inequality and Symmetric SONCs]{A Generalized Muirhead Inequality and Symmetric Sums of Nonnegative Circuits}
\begin{document}

\begin{abstract}
Circuit polynomials are a certificate of nonnegativity for real polynomials, which can be derived via a generalization of the classical inequality of arithmetic and geometric means.
In this article, we show that similarly nonnegativity of symmetric real polynomials can be certified via a generalization of the classical Muirhead inequality.
Moreover, we show that a nonnegative symmetric polynomial admits a decomposition into sums of nonnegative circuit polynomials if and only if it satisfies said generalized Muirhead condition.
The latter re-proves a result by Moustrou, Naumann, Riener, Theobald, and Verdure for the case of the symmetric group in a shortened and more elementary way.
\end{abstract}

\maketitle

\section{Introduction}

Nonnegative real polynomials are a fundamental object in real algebraic geometry. 
Deciding nonnegativity is a highly challenging problem. 
Thus, one is interested in finding conditions which imply nonnegativity but can be checked effectively. 
The first such \struc{certificate of nonnegativity} is the \struc{sum of squares (SOS)} certificate, which has been studied as early as the 19th century \cite{Hilbert:Seminal}.
During the last 20 to 25 years, the field (re-)gained substantial momentum due to its close connection to polynomial optimization \cite{Blekherman:Parrilo:Thomas,Lasserre:BookMomentsApplications,Lasserre:IntroductionPolynomialandSemiAlgebraicOptimization,Laurent:Survey}, including successful applications in various different areas such as control theory \cite{Parrilo:Thesis,PapachristodoulouPrajna:LyapunovSOS} and nonlinear partial differential equations \cite{Mevissen:Solving,Marx:Moment}.

A recent approach to certifying polynomial nonnegativity is based on \struc{sums of nonnegative circuits (SONC)}. 
A special case of this certificate was introduced by Reznick in \cite{Reznick:AGI}; the formulation for the general polynomial case was given by Iliman and the third author in \cite{Iliman:deWolff:Circuits}; see also \cite{Craciun:Koeppl:Pantea:GlobalInjectivity,Fidalgo:Kovacec} for independent, but related approaches.
Circuit polynomials have received rising attention in recent years as a criterion for deciding nonnegativity as decompositions can be computed via either convex optimization \cite{Chandrasekaran:Shah:SAGE-REP,Iliman:deWolff:FirstGP} or via symbolic computations in applications such as chemical reaction networks \cite{Feliu:Kaihnsa:Yueruek:deWolff:Multistationarity,Feliu:Kaihnsa:Yueruek:deWolff:Multistationarity:2}.  

A circuit polynomial's support set is minimally affine dependent. 
This, combined with some further conditions given in \cref{definition:Circuit}, ensures that deciding nonnegativity of a circuit polynomial is reduced to computing an invariant called \struc{circuit number} via solving a system of linear equations. 
This circuit number can be inferred from the classical \struc{inequality of arithmetic and geometric means (AM-GM inequality)}. 

For sums of squares (SOS), if the polynomial of interest is \emph{symmetric}, one could exploit symmetry in order to reduce the size of the semidefinite optimization problems corresponding to the SOS certificates. 
A path-leading contribution in this context was given by Gaterman and Parrilo \cite{Gaterman:Parrilo}, followed by many others, for example \cite{Raymond:Saunderson:Singh:Thomas,Riener:Theobald:Andren:Lasserre}.
An analogous result for SONC was recently obtained in \cite{Moustrou:Naumann:Riener:Theobald:Verdure:Symmetry}. 
That is, if a polynomial has underlying symmetry, then the size of the optimization problem that needs to be solved to obtain a SONC certificate can be reduced. 
The authors prove this result using abstract algebraic methods. 

In this paper, we show that a SONC certificate for symmetric polynomials can also be proven as a consequence of another classical inequality -- the \struc{Muirhead inequality} \cite{Muirhead}. 
This is a symmetric generalization of the AM-GM inequality and thus a canonical tool for studying symmetric SONCs.
We provide an overview on Muirhead's inequality and how it can be used in order to decide polynomial nonnegativity in \cref{sec:Muirhead}. 
In \cref{sec:GenMuirhead} we prove a generalized version of Muirhead's inequality which extends the classical inequality from simply comparing symmetric sums of monomials to also allowing certain scalar coefficients.
In \cref{sec:SymmetricSONC} we then use the generalized Muirhead inequality to derive in \cref{corollary:SymmetricSONC} conditions for nonnegativity of real symmetric polynomials.
These Muirhead conditions can be re-formulated in a way that involves circuit numbers for SONCs mentioned above.
In consequence, we show in \cref{theorem:MuirheadSONCEquivalence} that symmetric polynomials satisfy these conditions if and only if they are SONC.
This result reproves a theorem by Moustrou, Naumann, Riener, Theobald, and Verdure presented in \cite{Moustrou:Naumann:Riener:Theobald:Verdure:Symmetry} for the symmetric group, showcasing it as a natural consequence of a classical inequality, and avoiding further algebraic methods.
Finally, in \cref{cor:SymmetricClosure} we provide a reformulation of our results interpreting it as a description of the symmetric closure of the SONC cone for an arbitrary support.

\section*{Acknowledgments}
We thank Thorsten Theobald for his helpful comments.
JH and TdW were supported by the DFG grant WO 2206/1-1. NT was partially supported by NSF-DMS grant \#2113468, and the NSF IFML \#2019844 award to the University of Texas at Austin.
\section{Preliminaries}

We begin by fixing some basic notation. Bold notations are vectors, for example, we write $\struc{\xb}$ for $(x_1,\ldots,x_n) \in \R^n$. We denote the set of nonnegative real numbers by $\struc{\R_{\ge 0}}=\{ x \in \R \ : \ x \ge 0\}$, the set of real vectors indexed by elements in some set $A$ by $\struc{\R^A}$, and the set $\set{1, 2, \ldots, m}$ by $\struc{[m]}$ for some $m \in \N$.
We write the \struc{convex hull} of some set $S \subseteq \R^n$ as $\struc{\conv(S)}$, and the \struc{relative interior} of $\conv(S)$ as $\struc{\relint(\conv(S))}$.

\subsection{Sums of Nonnegative Circuits}

Let $\struc{\R[\xb]}$ be the ring of real, multivariate polynomials in $\xb \in \R^n$. 
Then every polynomial in $\R[\xb]$ is of the form
\begin{align*}
	f(\xb) \ = \ \sum_{\alpb \in A} c_{\alpb} \xb^{\alpb}
\end{align*} 
for some set of exponents $A \subset \N^n$, and with \struc{coefficients} $c_{\alpb} \in \R$.
We call the individual summand of $f$ \struc{monomials}.
The set $\struc{\supp(f)} = \set{ \alpb \ : \ \alpb \in A, \ c_{\alpb} \ne 0}$ is called the \struc{support (set)} of $f$. 
For every $f \in \R[\xb]$ we can assign the corresponding \struc{Newton polytope} given by 
\begin{align*}
	\struc{\New(f)} \ = \ \conv(\supp(f)) \subset \R^n \ .
\end{align*}

Deciding whether a function $f \in \R[\xb]$ is nonnegative is NP-hard in general; see for example \cite{Laurent:Survey}.
Hence, one is interested in finding \struc{certificates of nonnegativity}.
These are conditions on real multivariate polynomials which imply nonnegativity, hold for a reasonably large class of polynomials, and are easier to test than nonnegativity itself.
Here we focus on the \struc{SONC certificate}, which states that a polynomial is nonnegative if it can be decomposed into a \struc{sum of nonnegative circuit (SONC) polynomials}.

\begin{definition}[Circuit Polynomial]
	A polynomial $f \in \R[\xb]$ is called a \struc{circuit polynomial} if it is of the form
	\begin{align*}
		f(\xb) \ = \ \sum_{\alpb \in A^+} c_{\alpb} \xb^{\alpb} + c_{\betab} \xb^{\betab}
	\end{align*} 
	with support $A = A^+ \cup \set{\betab} \subset \N^n$, such that 
	\begin{enumerate}
		\item the coefficients satisfy $c_{\alpb} \in \R_{>0}$ for all $\alpb \in A^+$ and $c_{\betab} \in \R$, 
		\item $\New(f)$ is a simplex with even vertices, which are given by $A^+$, and 
		\item the exponent $\betab$ is in the strict interior of $\New(f)$. 
	\end{enumerate}
	Note that this implies that $A^+ \subset (2\N)^n$.
	\label{definition:Circuit}
\end{definition}

Recall that in matroid theory $A \subset \N^n$ is called a \struc{circuit} if $A$ is minimally affinely dependent, that is, $A$ is affinely dependent but any proper subset of $A$ is is affinely independent \cite[pp.9]{Oxley:MatroidTheory}.
As all circuits considered in \cref{definition:Circuit} and in what follows are given by the vertices (the $\alpb \in A^+$) of a simplex with one point in the interior (the $\betab$), we refer to the $\alpb$'s also as \struc{outer points} and to $\betab$ as \struc{inner point} of the corresponding circuit.
The notion of circuit polynomials in the sense of \cref{definition:Circuit} was first introduced by Iliman and the second author in \cite{Iliman:deWolff:Circuits}, building on work by Reznick in \cite{Reznick:AGI}.
A key property of circuit polynomials is that since $\New(f)$ is a simplex, we can uniquely represent the interior point $\betab$ as a convex combination of the vertices $\alpb \in A^+$.
That is, there exist \emph{unique} barycentric coordinates $\Vector{\lam} \in (0,1)^{A^+}$ with $\sum_{\alpb \in A^+} \lam_{\alpb} = 1$ such that 
\begin{align}
	\betab \ = \ \sum_{\alpb \in A^+} \lam_{\alpb} \alpb \ .
\end{align}
The nonnegativity of circuit functions can be easily decided by an invariant called the \struc{circuit number}, which is defined as
\begin{align*}
	\struc{\Theta_f} \ = \ \prod_{\alpb \in A^+} \left(\frac{c_{\alpb}}{\lam_{\alpb}}\right)^{\lam_{\alpb}} \ .
\end{align*}

\begin{theorem}[{\cite[Theorem 1.1]{Iliman:deWolff:Circuits}}]
	Let $f=\sum_{\alpb \in A^+} c_{\alpb} \xb^{\alpb} + c_{\betab} \xb^{\betab}$ be a circuit polynomial.
	Then $f$ is nonnegative if and only if
	\begin{align}
		\begin{array}{c c c}
			|c_{\betab}| \ \le \ \Theta_f,& \text{ or } & c_{\betab} \ge 0 \ .
		\end{array}
	\label{eq:NonnegCircuit}
	\end{align}
	\label{theorem:CircuitsNonnegativity}
\end{theorem}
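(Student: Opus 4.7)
The plan is to apply the weighted arithmetic--geometric mean inequality (weighted AM--GM), which meshes exactly with the structure of a circuit polynomial: the interior exponent $\betab$ is a convex combination $\betab = \sum_{\alpb \in A^+} \lam_\alpb \alpb$, and every outer exponent lies in $(2\N)^n$ so that $\xb^\alpb \ge 0$ for all $\xb \in \R^n$. I would split the proof into sufficiency, showing that the stated condition on $c_\betab$ forces $f \ge 0$, and necessity, producing a witness of negativity whenever the condition is violated.

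For sufficiency, fix $\xb \in \R^n$ and apply weighted AM--GM to the positive summands, with weights $\lam_\alpb$ and entries $y_\alpb = c_\alpb \xb^\alpb / \lam_\alpb$ (legitimate since $c_\alpb > 0$ and $\xb^\alpb \ge 0$):
\begin{align*}
\sum_{\alpb \in A^+} c_\alpb \xb^\alpb \ = \ \sum_{\alpb \in A^+} \lam_\alpb y_\alpb \ \ge \ \prod_{\alpb \in A^+} y_\alpb^{\lam_\alpb} \ = \ \Theta_f \cdot \prod_{\alpb \in A^+}(\xb^\alpb)^{\lam_\alpb} \ = \ \Theta_f \cdot |\xb^\betab|,
\end{align*}
using the barycentric identity $\sum_\alpb \lam_\alpb \alpb = \betab$ together with $\prod_\alpb(c_\alpb/\lam_\alpb)^{\lam_\alpb} = \Theta_f$. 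If $|c_\betab| \le \Theta_f$, then $f(\xb) \ge \Theta_f |\xb^\betab| + c_\betab \xb^\betab \ge (\Theta_f - |c_\betab|)|\xb^\betab| \ge 0$; if instead $c_\betab \ge 0$ and $\betab \in (2\N)^n$, then every monomial of $f$ is itself nonnegative.

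For necessity, I examine the equality case of AM--GM, which holds exactly when all $y_\alpb$ are equal. Fixing $T > 0$ and seeking $\xb^\ast$ with $c_\alpb |\xb^\ast|^\alpb / \lam_\alpb = T$ for every $\alpb \in A^+$ gives, after taking logarithms, a linear system in the entries of $(\log|x_1^\ast|, \ldots, \log|x_n^\ast|)$ that is solvable because the $\alpb$'s are vertices of a full-dimensional simplex. At such a point the AM--GM bound is tight, so $\sum_{\alpb \in A^+} c_\alpb (\xb^\ast)^\alpb = \Theta_f |\xb^\ast|^\betab$. Whenever $\betab$ has at least one odd coordinate, the signs of the $x_i^\ast$ can be chosen so that $(\xb^\ast)^\betab$ has the sign opposite to $c_\betab$, yielding $f(\xb^\ast) = (\Theta_f - |c_\betab|)|\xb^\ast|^\betab$, which is negative as soon as $|c_\betab| > \Theta_f$.

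The main obstacle is the sign bookkeeping when $\betab \in (2\N)^n$: in that case $(\xb^\ast)^\betab \ge 0$ is forced, so negativity of $f$ at the equality point is possible only if $c_\betab < 0$; this is precisely why the alternative ``$c_\betab \ge 0$'' appears in the theorem statement. A secondary technicality is the behaviour on the boundary where some $x_i = 0$, at which the geometric-mean bound degenerates to $0$; here continuity together with the strict interiority of $\betab$ in $\New(f)$ is enough to conclude, but keeping track of these limiting points cleanly is the fiddliest part of the argument.
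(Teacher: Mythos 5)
The paper does not prove this theorem; it imports it verbatim as Theorem~1.1 of Iliman and de Wolff, so there is no in-paper argument to compare against. Your approach---weighted AM--GM for sufficiency, and constructing the AM--GM equality point to exhibit a negativity witness for necessity---is the standard route taken in that reference (going back to Reznick's work on agiforms), and the argument is correct.

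You in fact handle one point more carefully than the statement you were given. You phrase the second sufficiency alternative as ``$c_\betab \ge 0$ \emph{and} $\betab \in (2\N)^n$'', whereas the theorem here says only ``$c_\betab \ge 0$''. Your own necessity construction shows the bare condition is not enough: if $\betab$ has an odd coordinate and $c_\betab > \Theta_f$, flipping the sign of an odd-coordinate $x_i^{\ast}$ at the equality point yields $(\xb^{\ast})^{\betab} < 0$ and hence $f(\xb^{\ast}) = (\Theta_f - c_\betab)\,\lvert (\xb^{\ast})^{\betab}\rvert < 0$. The cited source does distinguish the parity cases; the paper's restatement is slightly loose, though it is only ever invoked downstream in the safe form ``$\lvert c_\betab\rvert \le \Theta_f$ or $f$ is a sum of monomial squares''. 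So inserting the parity hypothesis is a correction, not an error on your part.

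Two small refinements. Solvability of the log-linear system for $(\log\lvert x_1^{\ast}\rvert, \dots, \log\lvert x_n^{\ast}\rvert)$ does not require the simplex to be full-dimensional (and the definition does not assume this): affine independence of $A^+$ alone makes the differenced system $(\alpb - \alpb_0)\cdot z = \log\bigl(\lam_\alpb c_{\alpb_0}/(\lam_{\alpb_0} c_\alpb)\bigr)$ consistent, and the common level $T$ is then read off afterwards. And the boundary concern you flag is vacuous: the bound $\sum_{\alpb \in A^+} c_\alpb \xb^{\alpb} \ge \Theta_f \lvert \xb^{\betab}\rvert$ holds for \emph{every} $\xb \in \R^n$, including on coordinate hyperplanes, and the equality point used for necessity is constructed with all $\lvert x_i^{\ast}\rvert > 0$, so nothing degenerates and no limiting argument is needed.
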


This approach to deciding polynomial nonnegativity can be motivated as a consequence of the classical \struc{inequality of arithmetic and geometric means (AM-GM)}
\begin{align}
	\frac{1}{m}\sum_{i=1}^{m} t_i \ \ge \ \sqrt[m]{\prod_{i=1}^{m} t_i}
	\label{eq:AMGM}
\end{align}
for nonnegative real numbers $t_1, \ldots, t_m$.

\begin{example}
	Consider \cref{eq:AMGM} and let $m = 3$, and $t_1 = x^4y^2, t_2 = x^2y^4, t_3 = 1$.
	This yields
	\begin{align*}
		\frac{x^4y^2 + x^2y^4 + 1}{3} \ \ge \ \sqrt[3]{x^6y^6} \ ,
	\end{align*}
	or, equivalently,
	\begin{align}
		x^4y^2 + x^2y^4 - 3x^2y^2 + 1 \ \ge \ 0 \ .
		\label{eq:Motzkin}
	\end{align} 
	This proves that the \struc{Motzkin polynomial}, given on the left hand side of the inequality \cref{eq:Motzkin}, is nonnegative.

\end{example}

With this, we formally define the cone of all sums of nonnegative circuit polynomials.

\begin{definition}[SONC Cone]
	The \struc{SONC cone} $\struc{\SONC}$ is the cone of all polynomials which have a representation as sums of nonnegative circuit polynomials or monomial squares. 
	If we fix a support set $A$, then we indicate this by writing $\struc{\ASONC}$.
	In some contexts it is useful to additionally fix the signs of all terms with exponents in the support set $A \subset \N^n$.
	Then we decompose the support $A$ into disjoint sets $\struc{A^+} \subset (2\N)^n$, $ \struc{A^-} \subset \N^n$.
	That means, we intersect the configuration space $\R^A$ with one particular orthant given by the chosen sign pattern. 
	In this case, we denote the corresponding SONC (sub-)cone by $\struc{\signedSONC}$.
\label{definition:SONCCone}
\end{definition}



For the case of polynomials on the positive orthant, or exponential sums, another equivalent certificate of nonnegativity has been studied.
Basis for this certificate are polynomials or exponential sums with at most one negative term, but without the additional conditions of circuit polynomials. 
These type of functions are called \struc{AGE functions}, and were first introduced by Chandrasekaran and Shah in \cite{Chandrasekaran:Shah:SAGE-REP}.
It has been shown in \cite{Wang:nonnegative,Murray:Chandrasekaran:Wierman:NewtonPolytopes} that every SONC polynomial can equivalently be studied as a \struc{sum of AGE (SAGE) functions}, i.e., the SONC and SAGE cone are the same object. 
For more on SAGEs, see e.g. \cite{Chandrasekaran:Shah:SAGE-REP,Murray:Chandrasekaran:Wierman:NewtonPolytopes,Murray:Chandrasekaran:Wierman:SigOptREP}. \\

In the SAGE language, the special case of \emph{symmetric} SONC polynomials has recently been explored. 
In \cite{Moustrou:Naumann:Riener:Theobald:Verdure:Symmetry}, the authors present a way to exploit symmetries in order to reduce the number of variables and constraints involved in finding a decomposition into AGE functions, see \cite[Theorem 4.1]{Moustrou:Naumann:Riener:Theobald:Verdure:Symmetry} for the exact result.
It turns out that we can derive a similar result using little more than known results about SONC polynomials, and the classical \struc{Muirhead's inequality}.

\subsection{Muirhead's Inequality}
\label{sec:Muirhead}

Similarly to the AM-GM inequality, Muirhead's inequality involves a comparison of sums of multivariate monomials. 
In fact, Muirhead's inequality is a generalization of the AM-GM inequality to \emph{symmetric sums}; see \cref{example:MuirheadAMGM}.
It was first published by Muirhead in 1903 in \cite{Muirhead}.
It can be derived from the AM-GM inequality, but independent proofs are known, for example, using the Birkhoff-von Neumann Theorem. For more information, see Hardy, Littlewood, and P\'olya \cite{Hardy:Littlewood:Polya:Inequalities}. \\

Let $\alpb \in \N^n$ be a lattice point and let $\struc{S_n}$ denote the symmetric group of permutations of $n$ elements. 
To be able to refer to fixed elements in $S_n$, we enumerate them as $S_n = \{ \sig_1, \dots, \sig_{n!} \}$. 
Furthermore, we define
\begin{align*}
	\struc{\Sigma(\alpb)} \ = \ \{ (\alp_{\sig(1)}, \dots,\alp_{\sig(n)}) \ : \ \sig \in S_n\} \subset \N^n \ .
\end{align*}
That is, $\Sigma(\alpb)$ contains all vectors which can be obtained by permuting the entries of $\alpb$.
Thus, if we use the notation $\struc{\sig(\alpb)} = \left(\alp_{\sig(1)},\dots,\alp_{\sig(n)}\right) \in \N^n$ for every $\sig \in S_n$ and every lattice point $\alpb \in \N^n$, then we have 
\begin{align}
	\Sigma(\alpb) \ = \ \{ \sig_1(\alpb), \dots, \sig_{n!}(\alpb) \} \ .
	\label{eq:SetRepresentation}
\end{align}

\begin{theorem}[Muirhead's Inequality]
	Let $\betab \in \conv(\Sigma(\alpb))$. 
	Then
	\begin{align}
		\sum_{\sig \in S_n} x_{1}^{\beta_{\sig(1)}} \cdots x_{n}^{\beta_{\sig(n)}} \ \le \ \sum_{\sig \in S_n} x_{1}^{\alp_{\sig(1)}} \cdots x_{n}^{\alp_{\sig(n)}}
	\label{eq:Muirhead}
	\end{align}	
	for nonnegative real numbers $x_1, \ldots, x_n$.
	\label{theorem:Muirhead}
\end{theorem}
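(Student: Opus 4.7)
The plan is to derive Muirhead's inequality from the classical weighted AM-GM inequality. The hypothesis $\betab \in \conv(\Sigma(\alpb))$ provides exactly the convex-combination representation that feeds into weighted AM-GM, applied to one symmetrized monomial at a time.

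First I would unpack the hypothesis: by definition of the convex hull, there exist weights $\lam_\sig \ge 0$ for $\sig \in S_n$ with $\sum_{\sig \in S_n} \lam_\sig = 1$ such that $\betab = \sum_{\sig \in S_n} \lam_\sig \, \sig(\alpb)$. The representation need not be unique, but any such choice of weights will work.

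Second, for any fixed $\tau \in S_n$, I would rewrite the monomial $\xb^{\tau(\betab)}$ using the identity $\tau(\betab) = \sum_\sig \lam_\sig (\tau\sig)(\alpb)$ (which follows from the one above since $\tau$ acts by permuting coordinates linearly), obtaining
\begin{align*}
\xb^{\tau(\betab)} \ = \ \prod_{i=1}^n x_i^{\sum_{\sig} \lam_\sig \, \alp_{(\tau\sig)(i)}} \ = \ \prod_{\sig \in S_n} \bigl( \xb^{(\tau\sig)(\alpb)} \bigr)^{\lam_\sig} \ .
\end{align*}
Weighted AM-GM, valid since the $x_i$ are nonnegative, then bounds the right-hand side from above by $\sum_\sig \lam_\sig \xb^{(\tau\sig)(\alpb)}$.

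Third, I would sum this pointwise bound over all $\tau \in S_n$ and swap the order of summation. For each fixed $\sig$, the map $\tau \mapsto \tau\sig$ is a bijection of $S_n$, so $\sum_\tau \xb^{(\tau\sig)(\alpb)} = \sum_{\tau'} \xb^{\tau'(\alpb)}$, which is independent of $\sig$. Pulling this factor out and using $\sum_\sig \lam_\sig = 1$ collapses the bound to exactly the right-hand side of \cref{eq:Muirhead}. I do not anticipate a real obstacle; the proof is essentially careful bookkeeping with permutations. The only mildly subtle point is that when $\alpb$ has repeated coordinates, the orbit $\Sigma(\alpb)$ contains fewer than $n!$ distinct elements and both sides of the inequality count each distinct monomial with the same stabilizer multiplicity, but this affects the two sides identically and does not change the direction of the inequality.
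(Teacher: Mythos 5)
The paper does not actually prove this theorem: it is stated as a classical result, with the authors pointing to Muirhead's original article and to Hardy, Littlewood, and P\'olya, and remarking only that it can be derived from the AM-GM inequality or via the Birkhoff--von Neumann theorem. Your proof carries out exactly the AM-GM derivation that the paper alludes to, and it is correct. One small wrinkle worth fixing: with the paper's convention $\sig(\alpb) = (\alp_{\sig(1)}, \dots, \alp_{\sig(n)})$, applying $\tau$ after $\sig$ gives $\tau(\sig(\alpb))_i = \sig(\alpb)_{\tau(i)} = \alp_{\sig(\tau(i))}$, so $\tau(\sig(\alpb)) = (\sig\tau)(\alpb)$ rather than $(\tau\sig)(\alpb)$ --- this convention makes the action a right action --- and accordingly the identity in your second step should read $\tau(\betab) = \sum_{\sig} \lam_{\sig}\, (\sig\tau)(\alpb)$, with the product rewritten as $\prod_{\sig}\bigl(\xb^{(\sig\tau)(\alpb)}\bigr)^{\lam_{\sig}}$. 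This slip does not affect the conclusion, since for each fixed $\sig$ the map $\tau \mapsto \sig\tau$ is still a bijection of $S_n$, so the inner sum $\sum_{\tau} \xb^{(\sig\tau)(\alpb)}$ is again $\sum_{\tau'} \xb^{\tau'(\alpb)}$ and the double sum collapses exactly as you describe. Your closing remark about repeated coordinates in $\alpb$ is also correct and, if anything, slightly more than you need, since you chose to index the convex weights by $S_n$ rather than by the distinct points of $\Sigma(\alpb)$, which already sidesteps the multiplicity issue.
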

\begin{example}
	Let $\alpb = (n, 0, \ldots, 0)$ and $\betab = (1, 1, \ldots, 1)$ be vectors in $\N^n$. 
	Then $\Sigma(\alpb)$ is the scaled standard simplex in $\N^n$ and $\betab \in \conv(\Sigma(\alpb))$.
	Furthermore, it holds that $\sig(\betab) = \betab$ for every $\sig \in S_n$ and there are exactly $(n-1)!$ permutations $\tau \in S_n$ that satisfy $\tau(\alpb) = \sig(\alpb)$ for any arbitrary fixed $\sig \in S_n$.
	Muirhead's inequality now becomes
	\begin{align*}
		n! \left( x_{1} \cdots x_{n}\right) \ \le \ (n-1)! (x_1^n + \cdots + x_n^n)
	\end{align*}
	for $\xb \in \R^n_{\ge 0}$, showing that it is indeed a generalization of the AM-GM inequality \cref{eq:AMGM}.
	\label{example:MuirheadAMGM}
\end{example}
\begin{example}[{\cite[pp.186]{Steele:CauchySchwarzMasterClass}}]
	Consider the case $n = 3$, and let $\betab = \smallMatrix{2\\ 3\\ 0}$ and $\alpb = \smallMatrix{1\\ 4\\ 0}$.
	Then we have 
	\begin{align*}
		\betab \ = \ \Matrix{2\\ 3\\ 0} \ = \ \frac{2}{3} \Matrix{1\\ 4\\ 0} + \frac{1}{3} \Matrix{4\\ 1\\ 0} \ \in \ \conv(\Sigma(\alpb)) \ .
	\end{align*}
	The left hand side of \cref{eq:Muirhead} now reads
	\begin{align*}
		\sum_{\sig \in S_3} x_{1}^{\betab_{\sig(1)}} x_{2}^{\betab_{\sig(2)}} x_{3}^{\betab_{\sig(3)}} 
		\ = \ x_1^2x_2^3 + x_1^2x_3^3 + x_1^3x_2^2 + x_2^2x_3^3 + x_1^3x_3^2 + x_2^3x_3^2 \ ,
	\end{align*}
	and the right hand side becomes
	\begin{align*}
		\sum_{\sig \in S_3} x_{1}^{\alpb_{\sig(1)}} x_{2}^{\alpb_{\sig(2)}} x_{3}^{\alpb_{\sig(3)}}
		\ = \ x_1x_2^4 + x_1x_3^4 + x_1^4x_2 + x_2x_3^4 + x_1^4x_3 + x_2^4x_3 \ .
	\end{align*}
	Thus, Muirhead's inequality tells us that the polynomial
	\begin{align*}
		f(x_1, x_2, x_3) \ = \ &x_1x_2^4 + x_1x_3^4 + x_1^4x_2 + x_2x_3^4 + x_1^4x_3 + x_2^4x_3 \\
		&-x_1^2x_2^3 - x_1^2x_3^3 - x_1^3x_2^2 - x_2^2x_3^3 - x_1^3x_3^2 - x_2^3x_3^2
	\end{align*}
	is nonnegative for all $\xb \in \R^3_{\ge 0}$.
	
	We can verify this statement by using the SONC approach.
	For this, we first make a simple variable transformation and let $x_i = y_i^2$ for all $i \in [3]$, so that instead of considering a polynomial over the positive orthant we can treat $f$ as a multivariate polynomial in $\yb \in \R^3$.
	Then we can write 
	\begin{align*}
		f(y_1, y_2, y_3) \ &= \ y_1^2y_2^8 + y_1^2y_3^8 + y_1^8y_2^2 + y_2^2y_3^8 + y_1^8y_3^2 + y_2^8y_3^2 \\
		& \qquad -y_1^4y_2^6 - y_1^4y_3^6 - y_1^6y_2^4 - y_2^4y_3^6 - y_1^6y_3^4 - y_2^6y_3^4 \\
		&= \ \left(\frac{2}{3}y_2^2y_3^8 - y_2^4y_3^6 +\frac{1}{3}y_2^8y_3^2\right) 
		+ \left(\frac{1}{3}y_2^2y_3^8 - y_2^6y_3^4 + \frac{2}{3}y_2^8y_3^2\right) \\
		& \qquad + \left(\frac{2}{3}y_1^2y_3^8 - y_1^4y_3^6 +\frac{1}{3}y_1^8y_3^2\right)
		+ \left(\frac{1}{3}y_1^2y_3^8 - y_1^6y_3^4 + \frac{2}{3}y_1^8y_3^2 \right) \\
		& \qquad + \left(\frac{2}{3}y_1^2y_2^8 - y_1^4y_2^6 +\frac{1}{3}y_1^8y_2^2 \right)
		+ \left(\frac{1}{3}y_1^2y_2^8 - y_1^6y_2^4 + \frac{2}{3}y_1^8y_2^2 \right) \ .
	\end{align*}
One can check that each of the polynomials in this decomposition are circuit polynomials which satisfy the condition \cref{eq:NonnegCircuit} for nonnegativity.
\end{example}

\section{A Generalized Muirhead Inequality}
\label{sec:GenMuirhead}

We now present a generalized version of Muirhead's inequality.
As one would expect, several generalizations of Muirhead's inequality were proven during the 120 years since its publication; see e.g. \cite{Cuttler:Greene:Skandera,Kato:Kosuda:Tokushige} as two recent examples. However, to the best of our knowledge, the generalization stated in \cref{lemma:GeneralizedMuirhead} appears to be new.

Let $\alpb \in \N^n$, $n \ge 3$, and consider $\Sigma(\alpb)$ as represented in \cref{eq:SetRepresentation}. 
Let $\betab$ be a lattice point in $\conv(\Sigma(\alpb))$. 
By Carath\'eodory's theorem there exist $n+1$ elements $\sig_1(\alpb), \dots, \sig_{n+1}(\alpb)$ in $\Sigma(\alpb)$ such that 
\begin{align}
	\begin{array}{c c c c c}
		\betab \ = \ \sum\limits_{j = 1}^{n+1} \lam_j \sig_j(\alpb), & \text{ where } & \Vector{\lam} \ \in \ (0,1)^{n+1} & \text{ and } & \sum\limits_{j = 1}^{n+1} \lam_j \ = \ 1 \ .
	\end{array}
\label{eq:1} 
\end{align}
Therefore, for every $\tau \in S_n$ it holds that
\begin{align}
	\begin{array}{c c c c c}
		\tau(\betab) \ = \ \sum\limits_{j = 1}^{n+1} \lam_j \tau(\sig_j(\alpb)), & \text{ where } & \Vector{\lam} \ \in \ (0,1)^{n+1} & \text{ and } & \sum\limits_{j = 1}^{n+1} \lam_j \ = \ 1 \ .
	\end{array}
	\label{eq:2} 
\end{align}
Now we can show the following generalized version of Muirhead's inequality, using the short notation $\struc{\xb^{\tau(\alpb)}} = x_{1}^{\alp_{\tau(1)}} \cdots x_{n}^{\alp_{\tau(n)}}$.

\begin{lemma}[Generalized Muirhead Inequality]
	Let $\alpb \in \N^n$ for $n \ge 3$ and $\betab \in \relint(\conv(\Sigma(\alpb)))$. 
	Consider an arbitrary collection of $n+1$ nonnegative real numbers $b_{j} \in \R_{\ge 0}$ for $j \in [n+1]$ corresponding to the barycentric coordinates $\lam_j$ as in \cref{eq:1}. 
	Then
	\begin{align}
		\sum_{\tau \in S_n} \prod_{j = 1}^{n+1} b_j^{\lam_j} \xb^{\tau(\betab)} \ \le \ \sum_{\tau \in S_n} \sum_{j=1}^{n+1} b_j \lam_j \xb^{\tau(\alpb)}
		\label{eq:GeneralizedMuirhead}
	\end{align}
	holds for arbitrary $\xb \in \R_{\ge 0}^n$.
\label{lemma:GeneralizedMuirhead}
\end{lemma}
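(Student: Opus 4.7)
\medskip

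\noindent\textbf{Proof plan.} My strategy is to reduce the inequality to the weighted arithmetic–geometric mean inequality applied separately to each $\tau\in S_n$, and then exploit the fact that right-multiplication by a fixed permutation is a bijection on $S_n$ to match the right-hand side back to the form in the statement.

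First, I would fix an arbitrary $\tau \in S_n$. Using \cref{eq:2} and the fact that $\mathbf{x}^{\mathbf{u}+\mathbf{v}}=\mathbf{x}^{\mathbf{u}}\mathbf{x}^{\mathbf{v}}$ componentwise on $\R_{\ge 0}^n$, one obtains the monomial identity
\begin{equation*}
    \mathbf{x}^{\tau(\boldsymbol{\beta})} \;=\; \mathbf{x}^{\sum_{j=1}^{n+1}\lambda_j\,\tau(\sigma_j(\boldsymbol{\alpha}))} \;=\; \prod_{j=1}^{n+1}\bigl(\mathbf{x}^{\tau(\sigma_j(\boldsymbol{\alpha}))}\bigr)^{\lambda_j}.
\end{equation*}
Multiplying by $\prod_j b_j^{\lambda_j}$ and combining the two products gives
\begin{equation*}
    \prod_{j=1}^{n+1} b_j^{\lambda_j}\,\mathbf{x}^{\tau(\boldsymbol{\beta})} \;=\; \prod_{j=1}^{n+1}\bigl(b_j\,\mathbf{x}^{\tau(\sigma_j(\boldsymbol{\alpha}))}\bigr)^{\lambda_j}.
\end{equation*}

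Next, since $\lambda_1,\dots,\lambda_{n+1}\in(0,1)$ with $\sum_j\lambda_j=1$ and the $b_j$ together with all monomial values are nonnegative, the weighted AM-GM inequality applies termwise to yield
\begin{equation*}
    \prod_{j=1}^{n+1}\bigl(b_j\,\mathbf{x}^{\tau(\sigma_j(\boldsymbol{\alpha}))}\bigr)^{\lambda_j} \;\le\; \sum_{j=1}^{n+1} \lambda_j\, b_j\,\mathbf{x}^{\tau(\sigma_j(\boldsymbol{\alpha}))}.
\end{equation*}
Summing this pointwise inequality over all $\tau \in S_n$ produces
\begin{equation*}
    \sum_{\tau\in S_n}\prod_{j=1}^{n+1} b_j^{\lambda_j}\,\mathbf{x}^{\tau(\boldsymbol{\beta})} \;\le\; \sum_{\tau\in S_n}\sum_{j=1}^{n+1} \lambda_j\, b_j\,\mathbf{x}^{\tau(\sigma_j(\boldsymbol{\alpha}))}.
\end{equation*}

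Finally, I would argue that for each fixed $j$ the map $\tau\mapsto \tau\sigma_j$ is a bijection of $S_n$, hence
\begin{equation*}
    \sum_{\tau\in S_n}\mathbf{x}^{\tau(\sigma_j(\boldsymbol{\alpha}))} \;=\; \sum_{\tau\in S_n}\mathbf{x}^{\tau(\boldsymbol{\alpha})}.
\end{equation*}
Substituting this identity (one $j$ at a time) into the right-hand side of the previous display recovers $\sum_{\tau\in S_n}\sum_{j=1}^{n+1} b_j\lambda_j\,\mathbf{x}^{\tau(\boldsymbol{\alpha})}$, which is exactly \cref{eq:GeneralizedMuirhead}.

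The argument is essentially a bookkeeping exercise on top of weighted AM-GM, so I do not foresee a serious obstacle; the only point that needs a little care is the symmetrization step at the end, which crucially uses that the coefficient bundle $(b_j\lambda_j)_j$ is attached to the summand index $j$ rather than to the specific permutation $\tau\sigma_j$. The hypothesis $\boldsymbol{\beta}\in\relint(\conv(\Sigma(\boldsymbol{\alpha})))$ is what guarantees all barycentric weights $\lambda_j$ are strictly positive in $(0,1)$, which is needed so that weighted AM-GM applies without degeneracy and the exponents $b_j^{\lambda_j}$ are well defined even when some $b_j = 0$.
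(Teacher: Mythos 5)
Your proof is correct, but it takes a genuinely different route from the paper's. The paper's proof factors the scalar $\prod_{j} b_j^{\lambda_j}$ out of the sum over $\tau$ (it does not depend on $\tau$), invokes the classical Muirhead inequality to bound $\sum_{\tau}\mathbf{x}^{\tau(\boldsymbol{\beta})}$ by $\sum_{\tau}\mathbf{x}^{\tau(\boldsymbol{\alpha})}$, and finally applies the weighted AM--GM inequality to the scalar alone, $\prod_{j} b_j^{\lambda_j} \le \sum_{j}\lambda_j b_j$. You instead absorb the $b_j$ into the monomials, apply weighted AM--GM \emph{termwise} for each fixed $\tau$, and then symmetrize the right-hand side by reindexing over $S_n$. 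In effect you re-derive the needed instance of Muirhead on the fly rather than cite it: your argument is self-contained and uses only weighted AM--GM plus a counting/bijection step, while the paper's is shorter because it treats Muirhead as a black box. Both yield the lemma, and your variant makes more transparent why the result specializes to classical Muirhead when all $b_j=1$.

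One small bookkeeping slip worth fixing: with the action $\sigma(\boldsymbol{\alpha})=(\alpha_{\sigma(1)},\dots,\alpha_{\sigma(n)})$ used in the paper, one computes $\bigl(\tau(\sigma_j(\boldsymbol{\alpha}))\bigr)_k = \alpha_{\sigma_j(\tau(k))}$, so $\tau(\sigma_j(\boldsymbol{\alpha}))=(\sigma_j\circ\tau)(\boldsymbol{\alpha})$. The bijection you need for the final reindexing is therefore left multiplication $\tau\mapsto\sigma_j\tau$, not right multiplication $\tau\mapsto\tau\sigma_j$. Since both are bijections of $S_n$, the conclusion $\sum_{\tau\in S_n}\mathbf{x}^{\tau(\sigma_j(\boldsymbol{\alpha}))}=\sum_{\tau\in S_n}\mathbf{x}^{\tau(\boldsymbol{\alpha})}$ is unaffected, but the stated substitution should be adjusted to match the action convention.
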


First, note that, in general, the inequality depends on our choice of the convex combination of the initial $\betab$; see \cref{eq:1}.
However, \textit{every} convex combination of $\betab$ in terms of the $\sigma_j(\alpb)$ can be chosen, and the lemma remains valid.
Second, observe that this statement is indeed a generalization of Muirhead's inequality, as choosing all $b_{\tau_j}$ equal to $1$ leads to the classical Muirhead inequality \cref{eq:Muirhead}, in which case the inequality does not depend on the choice of the convex combination, i.e., the $\lam_j$, anymore. 

\begin{proof}[Proof of \cref{lemma:GeneralizedMuirhead}]
	We combine the AM-GM inequality with the classical Muirhead inequality. 
	Assume that the conditions of \cref{lemma:GeneralizedMuirhead} are satisfied. 
	That is, let $\alpb \in \N^n$ for $n \ge 3$ and $\betab \in \relint(\conv(\Sigma(\alpb)))$ and let $b_{j} \in \R_{\ge 0}$ for $j \in [n+1]$, and assume that \cref{eq:1} holds. 
	Then we have 
	\begin{align*}
		\sum_{\tau \in S_n} \prod_{j = 1}^{n+1} b_j^{\lam_j} \xb^{\tau(\betab)} \ = \ \prod_{j = 1}^{n+1} b_j^{\lam_j} \sum_{\tau \in S_n} \xb^{\tau(\betab)} \ \le \ \prod_{j = 1}^{n+1} b_j^{\lam_j} \sum_{\tau \in S_n} \xb^{\tau(\alpb)} \ \le \ \sum_{j=1}^{n+1} b_j \lam_j \sum_{\tau \in S_n} \xb^{\tau(\alpb)} \ ,
	\end{align*}
	where the first inequality is given by the classical Muirhead inequality and the second one by the AM-GM inequality. 
	This proves the claim.
\end{proof}

\section{Symmetric Sums of Nonnegative Circuits}
\label{sec:SymmetricSONC}

We can use the above proof of our generalized version of Muirhead's inequality to infer a criterion for containment in the SONC cone which exploits symmetries. 
That means that we take the vectors $\alpb_i \in \Sigma(\alpb), \betab \in \N^n$ in the generalized Muirhead inequality to be the exponents of a circuit polynomial. 
At first glance, this seems to be a big restriction to the allowed support sets. But it turns out that the nonnegativity of any circuit polynomial can be reduced to nonnegativity of a circuit polynomials whose exponents satisfy these conditions.

\begin{lemma}
	Let $f(\xb) = \sum_{\alpb \in A^+} c_{\alpb} \xb^{\alpb} + c_{\betab} \xb^{\betab}$ be a circuit polynomial.
	Then $f$ is nonnegative if and only if for any arbitrary $\tilde{\alpb} \in (2\N)^n$ the polynomial
	\begin{align*}
		g(\xb) \ = \ \sum_{j = 1}^{n+1} \xb^{\sigma_j(\tilde\alpb)} + \left( c_{\betab} \cdot 
		\frac{ 
			n+1
			} 
			{\prod_{\alpb \in A^+} 
		\left(\frac{c_{\alpb}}{\lambda_{\alpb}}\right)^{\lambda_{\alpb}}} \right) \xb^{\tilde\betab} \ ,
	\end{align*}
	where $\tilde{\betab} = \sum_{j = 1}^{n+1} \frac{1}{n+1} \sigma_j(\tilde\alpb) - \sum_{\alpb \in A^+} \lambda_{\alpb} \alpb + \betab$ and the $\lam_{\alpb}$ are the barycentric coordinates of $\betab$ with respect to the $\alpb \in A^+$, is nonnegative.
\label{lemma:SupportReduction}
\end{lemma}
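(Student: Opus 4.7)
The plan is to prove the equivalence by comparing the nonnegativity criterion for $f$ given by \cref{theorem:CircuitsNonnegativity} with a direct AM-GM bound on $g$. The starting observation is that the elaborate expression for $\tilde{\betab}$ collapses: since $\betab$ is the inner point of the circuit $f$ with barycentric coordinates $\lambda_{\alpb}$, by definition $\betab = \sum_{\alpb \in A^+}\lambda_{\alpb}\alpb$, so the last two terms in the formula for $\tilde{\betab}$ cancel. Hence
\[
\tilde{\betab} \ = \ \frac{1}{n+1}\sum_{j=1}^{n+1}\sigma_j(\tilde{\alpb}),
\]
which is the centroid of the $n+1$ outer exponents of $g$, with uniform barycentric weights $1/(n+1)$ with respect to them.

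With this simplification, AM-GM applied to the nonnegative outer monomials of $g$ gives
\[
\sum_{j=1}^{n+1}\xb^{\sigma_j(\tilde{\alpb})} \ \ge \ (n+1)\,\xb^{\tilde{\betab}}
\]
for every $\xb \in \R_{\ge 0}^n$, with equality at $\xb = (1,\dots,1)$. Writing the inner coefficient of $g$ as $c' := c_{\betab}(n+1)/\Theta_f$ and combining this bound with the term $c'\,\xb^{\tilde{\betab}}$ yields $g(\xb) \ge (n+1+c')\,\xb^{\tilde{\betab}}$, again with equality at $(1,\dots,1)$. Therefore $g \ge 0$ on $\R_{\ge 0}^n$ if and only if $c' \ge 0$ or $c' \ge -(n+1)$, and after dividing the latter by $(n+1)/\Theta_f > 0$ these are equivalent to $c_{\betab} \ge 0$ or $|c_{\betab}| \le \Theta_f$. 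By \cref{theorem:CircuitsNonnegativity} these are exactly the conditions for $f \ge 0$, establishing the equivalence.

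The main subtlety is that $g$ is in general \emph{not} a circuit polynomial in the sense of \cref{definition:Circuit}: all $n+1$ orbit exponents $\sigma_j(\tilde{\alpb})$ share the same coordinate sum (since permutation preserves it), so they lie in a common hyperplane and their convex hull has dimension at most $n-1$, hence cannot be the vertex set of an $n$-simplex. Consequently \cref{theorem:CircuitsNonnegativity} is not directly applicable to $g$, and the role of the AM-GM step above is precisely to bypass this: the bound used holds whenever $\tilde{\betab}$ is a convex combination of the outer exponents, with no affine-independence assumption required. A further minor point is that $\tilde{\betab}$ is only guaranteed to be rational, so $g$ should be read as a signomial, which is harmless as the statement concerns nonnegativity on $\R_{\ge 0}^n$.
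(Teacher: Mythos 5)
Your proof is correct, and it takes a genuinely more careful route than the paper at the key step. Both arguments begin with the same observation: since $\betab = \sum_{\alpb \in A^+} \lam_{\alpb}\alpb$, the formula for $\tilde\betab$ collapses to $\tilde\betab = \frac{1}{n+1}\sum_{j=1}^{n+1}\sig_j(\tilde\alpb)$, so $\tilde\betab$ is represented with uniform weights $\frac{1}{n+1}$. From there the paper simply invokes \cref{theorem:CircuitsNonnegativity} for $g$, reading off a circuit number of $\prod_j(n+1)^{1/(n+1)} = n+1$. You correctly flag that this invocation is not strictly licensed: the $n+1$ exponents $\sig_1(\tilde\alpb),\dots,\sig_{n+1}(\tilde\alpb)$ all share the same coordinate sum, hence lie in a common hyperplane, and $n+1$ points in an $(n-1)$-dimensional affine subspace cannot be affinely independent and so cannot be the vertices of a simplex as required by \cref{definition:Circuit}; moreover the uniform ``barycentric coordinates'' need not be the unique convex representation of $\tilde\betab$. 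Your replacement -- proving sufficiency via the plain AM-GM bound $\sum_j\xb^{\sig_j(\tilde\alpb)} \ge (n+1)\,\xb^{\tilde\betab}$, which holds for any convex combination representation with no affine-independence hypothesis, and necessity by observing equality at $\xb=(1,\dots,1)$ -- is more elementary and closes the gap. Your closing remark that $\tilde\betab$ may be non-integral, so that $g$ should be read as a signomial on $\R_{\ge 0}^n$, is also a fair point; the paper dismisses this with a ``without loss of generality $\tilde\betab\in\N^n$''. In short: same simplifying identity, but you justify the nonnegativity characterization of $g$ by a direct and fully rigorous AM-GM argument where the paper leans on \cref{theorem:CircuitsNonnegativity} outside its stated hypotheses.
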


\begin{proof}
	By \cref{theorem:CircuitsNonnegativity}, the circuit polynomial $f$ is nonnegative if and only if $|c_{\betab}| \le \prod_{\alpb \in A^+} \left(\frac{c_{\alpb}}{\lambda_{\alpb}}\right)^{\lambda_{\alpb}}$, or if it is a sum of monomial squares.
	Since the latter case is trivial, assume that $c_{\betab} < 0$.
	Fix an arbitrary $\tilde\alpb \in (2\N)^n$.
	Then $\betab = \sum_{\alpb \in A^+} \lam_{\alpb} \alpb$ is equivalent to 
	\begin{align*}
		\sum_{j = 1}^{n+1} \frac{1}{n+1} \sigma_j(\tilde\alpb) \ = \ \sum_{j = 1}^{n+1} \frac{1}{n+1} \sigma_j(\tilde\alpb) - \sum_{\alpb \in A^+} \lambda_{\alpb} \alpb + \betab \ = \ \tilde\betab \ .
	\end{align*}
	With this, $\tilde{\betab}$ is contained in the relative interior of $\conv{\Sigma(\tilde{\alpb})}$ with barycentric coordinates $\left(\frac{1}{n+1}, \ldots, \frac{1}{n+1}\right)$ with respect to $\sigma_1(\tilde{\alpb}), \ldots, \sigma_{n+1}(\tilde{\alpb})$.
	Furthermore, we can trivially rewrite $|c_{\betab}| \le \prod_{\alpb \in A^+} \left(\frac{c_{\alpb}}{\lambda_{\alpb}}\right)^{\lambda_{\alpb}}$ as 
	\begin{align*}
		|c_{\betab}| \cdot 
		\frac{ 
			n + 1
			} 
			{\prod_{\alpb \in A^+} 
		\left(\frac{c_{\alpb}}{\lambda_{\alpb}}\right)^{\lambda_{\alpb}}} 
		\ \le \ 
		n + 1 \ .
	\end{align*}

	We can assume without loss of generality that $\tilde\betab \in \N^n$. 
	Since by \cref{theorem:CircuitsNonnegativity} 
	\begin{align*}
		g(\xb) \ = \ \sum_{j = 1}^{n+1} \xb^{\sigma_j(\tilde\alpb)} + \left( c_{\betab} \cdot 
		\frac{ 
			n+1
			} 
			{\prod_{\alpb \in A^+} 
		\left(\frac{c_{\alpb}}{\lambda_{\alpb}}\right)^{\lambda_{\alpb}}} \right) \xb^{\tilde\betab} \ ,
	\end{align*}
	is nonnegative if and only if 
	\begin{align*}
		\left( \vert c_{\betab} \vert \cdot 
		\frac{ 
			n+1
			} 
			{\prod_{\alpb \in A^+} 
		\left(\frac{c_{\alpb}}{\lambda_{\alpb}}\right)^{\lambda_{\alpb}}} \right) \ \le \ \prod_{j = 1}^{n+1} \left(n+1\right)^{\frac{1}{n+1}} \ = \ n+1 \ ,
	\end{align*}
	it follows by our above argumentation that the nonnegativity of $f$ and $g$ is equivalent.
\end{proof} 

The following result shows that the generalized Muirhead inequality \cref{lemma:GeneralizedMuirhead} yields a certificate of nonnegativity for symmetric polynomials. 
We introduce the notation
\begin{align*}
	\struc{\Sigma(A)} \ = \ \set{\sigma(\Vector{\gamma}) \ : \ \sigma \in S_n, \Vector{\gamma} \in A}
\end{align*}
for the symmetric closure of a set $A \subset \R^n$. 

Furthermore, we denote by $\struc{\cC(A^+, \betab)}$ the set of all circuits with inner point $\betab$ over a given signed support set $A^+ \cup \set{\betab} \subset \N^n$. 
We refer to elements of $\cC(A^+, \betab)$ by writing them as a tuple of the form $(C^+, \betab)$, where $C^+ \subseteq A^+$ and $\betab \in \relint(\conv(C^+))$.

\begin{corollary}
	Let $A = A^+ \cup A^- \subset \N^n$ and let 
	\begin{align*}
		f(\xb) \ = \ \sum_{\sig \in S_n} \sum_{\betab \in A^-} \sum_{(C^+, \betab) \in \cC(A^+, \betab)} f^{\sig}_{(C^+, \betab)}(\xb)
	\end{align*}
	be a symmetric polynomial supported on $\Sigma(A)$, such that the
	\begin{align}
		f^{\sig}_{(C^+, \betab)}(\xb) \ = \ \sum_{\alpb \in C^+} c_{\alpb}^{(C^+, \betab)} \xb^{\sig(\alpb)} + c_{\betab}^{(C^+, \betab)} \xb^{\sig(\betab)}
	\label{eq:SymmetricSum}
	\end{align}
	are circuit polynomials, where $c_{\betab}^{(C^+, \betab)} \le 0$.\footnote{Note that, in general, for fixed $\betab$ by far not all circuits in $\cC(A^+, \betab)$ are needed to represent $f(\xb)$; see for example \cite{Forsgaard:deWolff:BoundarySONCcone}. 
	In order to keep the notation as simple as possible, we summarize over all circuits and allow for redundant circuits the corresponding circuit polynomials $f^{\sig}_{(C^+, \betab)}(\xb)$ to be the zero polynomial.}

	Then $f(\xb)$ is nonnegative if for every $(C^+, \betab) \in \cC(A^+, \betab)$ with $\betab \in A^-$ one of the following equivalent conditions holds:
	\begin{enumerate}
		\item$
			\sum\limits_{\sig \in S_n} \left| c_{\betab}^{(C^+, \betab)} \right| \xb^{\sig(\betab)} \ \le \ \sum\limits_{\sig \in S_n} \prod\limits_{\alpb \in C^+} \left(\frac{c_{\alpb}^{(C^+, \betab)}}{\lam_{\alpb}^{(C^+, \betab)}}\right)^{\lam_{\alpb}^{(C^+, \betab)}} \xb^{\sig(\betab)}$
			\label{eq:MuirheadCondition}
		\item $\left| c_{\betab}^{(C^+, \betab)} \right| \ \le \ \prod\limits_{\alpb \in C^+} \left(\frac{c_{\alpb}^{(C^+, \betab)}}{\lam_{\alpb}^{(C^+, \betab)}}\right)^{\lam_{\alpb}^{(C^+, \betab)}}$ \ .
		\label{eq:CircuitCondition}
	\end{enumerate}	
	In both conditions, $\Vector{\lam}^{(C^+, \betab)}$ is the unique vector of barycentric coordinates of $\betab$ with respect to $C^+$.
\label{corollary:SymmetricSONC}
\end{corollary}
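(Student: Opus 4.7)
My plan is to establish the equivalence of the two stated conditions first, and then to read off nonnegativity of $f$ by applying \cref{theorem:CircuitsNonnegativity} to each summand in \cref{eq:SymmetricSum}. The only substantive observation needed is that the circuit number of $f^{\sig}_{(C^+, \betab)}$ does not depend on $\sig$, after which both claims are essentially immediate.

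For the equivalence of \cref{eq:MuirheadCondition} and \cref{eq:CircuitCondition}, I fix a circuit $(C^+, \betab)$ and abbreviate $\Theta := \prod_{\alpb \in C^+}(c^{(C^+,\betab)}_{\alpb}/\lam^{(C^+,\betab)}_{\alpb})^{\lam^{(C^+,\betab)}_{\alpb}}$. The coefficients $c^{(C^+,\betab)}_{\alpb}$, $c^{(C^+,\betab)}_{\betab}$, and the barycentric coordinates $\lam^{(C^+,\betab)}_{\alpb}$ are constants not depending on $\sig$, so both sides of \cref{eq:MuirheadCondition} are scalar multiples of the orbit polynomial $\sum_{\sig \in S_n} \xb^{\sig(\betab)}$: the left side equals $|c^{(C^+,\betab)}_{\betab}|$ times it, and the right side equals $\Theta$ times it. Since this orbit polynomial has nonnegative coefficients and is strictly positive on $\R^n_{>0}$, the pointwise inequality \cref{eq:MuirheadCondition} on $\R^n_{\ge 0}$ is equivalent to the scalar inequality $|c^{(C^+,\betab)}_{\betab}| \le \Theta$, which is exactly \cref{eq:CircuitCondition}.

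For the nonnegativity of $f$, I would verify that each summand $f^{\sig}_{(C^+, \betab)}$ is itself a circuit polynomial in the sense of \cref{definition:Circuit}: its outer support $\sig(C^+)$ still lies in $(2\N)^n$ because coordinate permutations preserve parity of each entry, its Newton polytope is a simplex as the image of $\conv(C^+)$ under $\sig$, and $\sig(\betab)$ lies in its relative interior. Applying $\sig$ to $\betab = \sum_{\alpb \in C^+} \lam^{(C^+,\betab)}_{\alpb}\,\alpb$ produces the barycentric decomposition of $\sig(\betab)$ with respect to $\sig(C^+)$, with the same coefficients $\lam^{(C^+,\betab)}_{\alpb}$; since the coefficient of $\xb^{\sig(\alpb)}$ in $f^{\sig}_{(C^+, \betab)}$ is also the original $c^{(C^+,\betab)}_{\alpb}$, the circuit number of $f^{\sig}_{(C^+, \betab)}$ equals $\Theta$ for every $\sig \in S_n$. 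Condition \cref{eq:CircuitCondition} is therefore the nonnegativity hypothesis of \cref{theorem:CircuitsNonnegativity} applied simultaneously to every $f^{\sig}_{(C^+, \betab)}$, so each summand is nonnegative on $\R^n$ and $f$ is nonnegative as a sum of nonnegative polynomials. I do not foresee any genuine obstacle; conceptually, the formulation \cref{eq:MuirheadCondition} is the symmetric-sum packaging of the scalar condition \cref{eq:CircuitCondition}, and this packaging is precisely what exhibits the certificate as an instance of the generalized Muirhead inequality \cref{lemma:GeneralizedMuirhead}, as an alternative route to the same conclusion.
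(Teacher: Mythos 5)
Your proof is correct, but it takes a genuinely different route from the paper's. The paper proves nonnegativity by first invoking \cref{lemma:SupportReduction} to assume without loss of generality that $C^+$ lies in a single orbit $\Sigma(\tilde\alpb)$, and then bounding the symmetric sum $\sum_{\sig \in S_n} f^{\sig}_{(C^+,\betab)}$ from below via the chain of inequalities in \cref{lemma:GeneralizedMuirhead}. You instead bypass the generalized Muirhead inequality entirely: you observe that each permuted summand $f^{\sig}_{(C^+,\betab)}$ is itself a circuit polynomial (permutations preserve evenness of $C^+$, simpliciality of the Newton polytope, the interior position of $\betab$, and the barycentric coordinates $\lam_\alpb$), so its circuit number is the same $\Theta$ for every $\sig$, and condition \cref{eq:CircuitCondition} certifies each summand directly via \cref{theorem:CircuitsNonnegativity}. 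Your route is shorter and more elementary, and in fact yields the stronger conclusion that $f$ is SONC, not merely nonnegative; it also avoids the orbit-support reduction that the paper needs so that its $C^+$ fits the hypotheses of \cref{lemma:GeneralizedMuirhead}. What your route forgoes is the paper's thesis: the authors deliberately derive the corollary as a consequence of the generalized Muirhead inequality to exhibit the symmetric SONC certificate as a Muirhead-type statement, which is the central narrative of the article. You acknowledge this at the end, and indeed your argument is essentially the one the paper deploys in the forward direction of \cref{theorem:MuirheadSONCEquivalence}, where the SONC membership is established directly from per-summand circuit numbers.
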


Condition (\cref{eq:MuirheadCondition}) in the statement above immediately implies that the generalized Muirhead inequality holds for the positive real numbers $\set{\frac{c_{\alpb}^{(C^+, \betab)}}{\lam_{\alpb}^{(C^+, \betab)}}}_{\alpb \in C^+}$. 
Condition (\cref{eq:CircuitCondition}) equivalently restates this in terms of a circuit number condition that exploits symmetry in the sense that the nonnegativity of the \cref{eq:SymmetricSum} is independent of $\sigma$ and thus only needs to be checked for one permutation in $S_n$.

\begin{proof}[Proof of \cref{corollary:SymmetricSONC}]
	The equivalence of (\cref{eq:MuirheadCondition}) and (\cref{eq:CircuitCondition}) is clear by term inspection.

	For the remainder of the proof, we first note that we can assume without loss of generality that the support sets of the $f^{\sig}_{(C^+, \betab)}(\xb)$ satisfy that $C^+$ is generated by a single vector $\tilde{\alpb} \in (2\N)^n$, i.e., that $C^+ = \Sigma(\tilde{\alpb})$ by \cref{lemma:SupportReduction}.
	If (\cref{eq:MuirheadCondition}) holds, then the nonnegativity of 
	\begin{align*}
		f(\xb) = \sum_{\betab \in A^-} \sum_{(C^+, \betab) \in \cC(A^+, \betab)} \left( \sum_{\sig \in S_n} f^{\sig}_{(C^+, \betab)}(\xb) \right)
	\end{align*}
	follows directly from the generalized Muirhead inequality \cref{eq:GeneralizedMuirhead} by setting $b_{\alpb}^{(C^+, \betab)} = \frac{c_{\alpb}^{(C^+, \betab)}}{\lam_{\alpb}^{(C^+, \betab)}}$ for all $\alpb \in C^+$, since
	\begin{align*}
		\sum_{\sig \in S_n} f^{\sig}_{(C^+, \betab)}(\xb) \ &= \ 
		\sum_{\sigma \in S_n} \sum_{\alpb \in C^+} c_{\alpb}^{(C^+, \betab)} \xb^{\sig(\alpb)} + c_{\betab}^{(C^+, \betab)} \xb^{\sig(\betab)} \\
		\ &\ge \ \sum_{\sigma \in S_n} \sum_{\alpb \in C^+} b_{\alpb}^{(C^+, \betab)} \lam_{\alpb}^{(C^+, \betab)} \xb^{\sig(\alpb)} - \left| c_{\betab}^{(C^+, \betab)} \right| \xb^{\sig(\betab)} \\
		&\ge \ \sum_{\sigma \in S_n} \sum_{\alpb \in C^+} b_{\alpb}^{(C^+, \betab)} \lam_{\alpb}^{(C^+, \betab)} \xb^{\sig(\alpb)} - \prod_{\alpb \in C^+} \left(\frac{c_{\alpb}^{(C^+, \betab)}}{\lam_{\alpb}^{(C^+, \betab)}}\right)^{\lam_{\alpb}^{(C^+, \betab)}} \xb^{\sig(\betab)} 
		\ \ge \ 0 
	\end{align*}
	holds for every $(C^+, \betab) \in \cC(A^+, \betab)$ with $\betab \in A^-$.
\end{proof}

\cref{corollary:SymmetricSONC} shows that we can derive the nonnegativity of symmetric sums of circuit polynomials from Muirhead's inequality, analogous to deriving the nonnegativity of circuit polynomials from the (weighted) AM-GM inequality. 

Now, we prove that symmetric polynomials are contained in the SONC cone if and only if their nonnegativity can be proven via \cref{corollary:SymmetricSONC}.
This result can be seen as a simplified restatement of \cite[Theorem 4.1]{Moustrou:Naumann:Riener:Theobald:Verdure:Symmetry} for the symmetric group $S_n$. 
Here, we rely only on the generalized Muirhead inequality to prove the statement, which illustrates that extensions of nonnegativity certificates to symmetric SONCs are a natural consequence of this inequality and do not rely on further algebraic techniques.

\begin{theorem}
	Let $A = A^+ \cup A^- \subset \N^n$ and $f$ be a symmetric polynomial supported on $\Sigma(A^+) \cup \Sigma(A^-) \subset \N^n$. 
	Then $f \in \SONC_{\Sigma(A^+), \Sigma(A^-)}$ if and only if it admits a representation of the form 
	\begin{align}
		f(\xb) \ = \ \sum_{\sig \in S_n} \sum_{\betab \in A^-} \sum_{(C^+, \betab) \in \cC(A^+, \betab)} f^{\sig}_{(C^+, \betab)}(\xb) \ ,
		\label{eq:SymmRepresentation}
	\end{align}
	with $f^{\sig}_{(C^+, \betab)}$ as in \cref{eq:SymmetricSum} such that one of the equivalent conditions in \cref{corollary:SymmetricSONC} is satisfied.
	\label{theorem:MuirheadSONCEquivalence}
\end{theorem}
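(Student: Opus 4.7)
The plan is to prove both implications separately, with the ``if'' direction being essentially a restatement of the definitions and the ``only if'' direction requiring a symmetrization of an arbitrary SONC decomposition.

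For the ``if'' direction, I would apply \cref{theorem:CircuitsNonnegativity} to each circuit polynomial $f^{\sigma}_{(C^+, \betab)}$ individually. Condition \cref{eq:CircuitCondition} of \cref{corollary:SymmetricSONC} is precisely the nonnegativity criterion $|c_{\betab}| \le \Theta_f$ of that theorem, and the circuit number $\prod_{\alpb \in C^+}(c^{(C^+,\betab)}_\alpb/\lam^{(C^+,\betab)}_\alpb)^{\lam^{(C^+,\betab)}_\alpb}$ is invariant under $\sigma$, since it depends only on the coefficients and the barycentric coordinates, both of which are preserved under the relabelling $\alpb \mapsto \sigma(\alpb)$. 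Hence each $f^{\sigma}_{(C^+,\betab)}$ is a nonnegative circuit polynomial and \cref{eq:SymmRepresentation} exhibits $f$ as a sum of nonnegative circuits supported on $\Sigma(A^+) \cup \Sigma(A^-)$.

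For the ``only if'' direction, I would start with an arbitrary SONC decomposition $f = \sum_k g_k$, where each $g_k$ is either a monomial square or a nonnegative circuit polynomial with outer simplex $D_k^+ \subseteq \Sigma(A^+)$ and inner point $\gamma_k \in \Sigma(A^-)$. Since $f$ is symmetric and therefore equals its own $S_n$-average, I would symmetrize the decomposition as
\begin{align*}
f \ = \ \frac{1}{n!}\sum_{\tau \in S_n} \tau(f) \ = \ \sum_k \frac{1}{n!} \sum_{\tau \in S_n} \tau(g_k),
\end{align*}
where each $\tau(g_k)$ remains a nonnegative circuit polynomial with the same circuit number as $g_k$. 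Replacing $A^+$ and $A^-$ by their $S_n$-closures if necessary, which does not change the cone $\SONC_{\Sigma(A^+), \Sigma(A^-)}$, I may assume $D_k^+ \subseteq A^+$ and $\gamma_k \in A^-$, so $(D_k^+, \gamma_k) \in \cC(A^+, \gamma_k)$. Collecting all indices $k$ whose circuits lie in a common $S_n$-orbit $(C^+, \betab)$ and aggregating the coefficients, the inner average becomes exactly a term of the form $\sum_{\sigma \in S_n} f^{\sigma}_{(C^+, \betab)}$ as in \cref{eq:SymmRepresentation}, with each $f^{\sigma}_{(C^+, \betab)}$ a nonnegative circuit polynomial; \cref{theorem:CircuitsNonnegativity} then yields condition \cref{eq:CircuitCondition}.

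The main obstacle is ensuring that the ``collect all $k$'s in a common orbit and add'' step produces a single nonnegative circuit polynomial, i.e.\ that the set of nonnegative circuits with a fixed support is closed under addition. This reduces to the inequality $\prod_{\alpb \in C^+}(a_\alpb + b_\alpb)^{\lam_\alpb} \ge \prod_{\alpb \in C^+} a_\alpb^{\lam_\alpb} + \prod_{\alpb \in C^+} b_\alpb^{\lam_\alpb}$ for positive $a_\alpb, b_\alpb$ with $\sum_\alpb \lam_\alpb = 1$, which follows from weighted AM-GM applied to $y_\alpb := a_\alpb/(a_\alpb + b_\alpb)$ and $1 - y_\alpb$. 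With this closure property and the orbit bookkeeping in place, both implications are complete, and the theorem follows as a clean parallel to the scalar AM-GM/circuit correspondence in the non-symmetric setting.
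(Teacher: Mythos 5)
Your proof follows the same overall strategy as the paper: the ``if'' direction applies \cref{theorem:CircuitsNonnegativity} termwise together with the $\sigma$-invariance of barycentric coordinates (hence of circuit numbers), and the ``only if'' direction symmetrizes an existing SONC decomposition via the $S_n$-average $\frac{1}{n!}\sum_{\tau \in S_n}\tau(\cdot)$ and then reorganizes by orbit. That is exactly the paper's route.

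There is, however, one genuine gap in your ``only if'' direction: you begin by \emph{assuming} a decomposition $f = \sum_k g_k$ in which every nonnegative circuit $g_k$ already has outer simplex $D_k^+ \subseteq \Sigma(A^+)$ and inner point $\gamma_k \in \Sigma(A^-)$. This is not automatic. Membership in $\SONC_{\Sigma(A^+),\Sigma(A^-)}$ a priori allows SONC decompositions whose circuits involve exponents outside $\Sigma(A^+) \cup \Sigma(A^-)$ that cancel in the sum; the fact that a cancellation-free representation on the support of $f$ exists is a nontrivial theorem. The paper bridges this exact step by invoking \cite[Theorem 5.6]{Wang:nonnegative}. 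You need to cite or reprove that result rather than build it into the starting hypothesis.

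The subtlety you do flag --- that ``collect all $k$'s in a common orbit and add'' requires the set of nonnegative circuits on a fixed support to be closed under addition --- is correct and is handled only implicitly in the paper. Your reduction to the superadditivity of the weighted geometric mean, $\prod_{\alpb}(a_\alpb + b_\alpb)^{\lam_\alpb} \ge \prod_{\alpb} a_\alpb^{\lam_\alpb} + \prod_{\alpb} b_\alpb^{\lam_\alpb}$, proved by applying weighted AM--GM to $y_\alpb$ and $1-y_\alpb$ (this is Mahler's inequality), is right. The paper instead sidesteps the merge by tolerating redundant circuits indexed by $\sigma$ (assuming $\#\Sigma(\gamma)=n!$ without loss of generality and ``refraining from simplifying''); your version merges and therefore needs --- and correctly supplies --- the closure-under-addition property, which makes the dependence on the weighted AM--GM more visible. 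With the citation to the cancellation-free-representation theorem added, the proof is sound.
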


\begin{proof}
	Assume first that $f$ can be written as in \cref{eq:SymmRepresentation} and that Condition (\cref{eq:CircuitCondition}) holds. 
	Then the nonnegativity of $f$ follows from \cref{corollary:SymmetricSONC}.
	We only need to prove that $f$ is also contained in the SONC cone.
	It holds by assumption that $f$ is a sum of circuit polynomials $f^{\sig}_{(C^+, \betab)}(\xb) = \sum_{\alpb \in C^+} c_{\alpb}^{(C^+, \betab)} \xb^{\sig(\alpb)} + c_{\betab}^{(C^+, \betab)} \xb^{\sig(\betab)}$. 
	All of these circuit polynomials are nonnegative as a consequence of the fact that 
	\begin{align}
		\left| c_{\betab}^{(C^+, \betab)} \right| \ \le \ \prod_{\alpb \in C^+} \left(\frac{c_{\alpb}^{(C^+, \betab)}}{\lam_{\alpb}^{(C^+, \betab)}}\right)^{\lam_{\alpb}^{(C^+, \betab)}}
	\end{align}
	holds for all $\betab \in A^-$ and all $(C^+, \betab) \in \cC(A^+, \betab)$. 
	Only checking this condition for one permutation $\sigma \in S_n$ is sufficient, because the barycentric coordinates of $\sigma(\betab)$ with respect to $\set{\sigma(\alpb) \ : \ \alpb \in C^+}$ are independent of the permutation $\sigma \in S_n$, see \cref{eq:2}. 
	Thus, $f(\xb) = \sum_{\sig \in S_n} \sum_{\betab \in A^-} \sum_{(C^+, \betab) \in \cC(A^+, \betab)} f^{\sig}_{(C^+, \betab)}(\xb)$ is a sum of nonnegative circuit polynomials and contained in $\SONC_{\Sigma(A^+), \Sigma(A^-)}$.

	For the remaining direction of the proof, let $f$ be a symmetric polynomial in $\SONC_{\Sigma(A^+), \Sigma(A^-)}$. 
	Then $f$ has a cancellation-free representation as a sum of nonnegative circuit polynomials $f_{(C^+, \betab)}(\xb) = \sum_{\alpb \in C^+} c_{\alpb}^{(C^+, \betab)} \xb^{\alpb} + c_{\betab}^{(C^+, \betab)} \xb^{\betab}$ of the form 
	\begin{align}
		f(\xb) \ = \ \sum_{\betab \in \Sigma(A^-)} \sum_{(C^+, \betab) \in \cC(\Sigma(A^+), \betab)} f_{(C^+, \betab)}(\xb),
		\label{eq:SONCRepNoSym}
	\end{align}
	that is, a representation that does not require exponents outside of $\Sigma(A^+) \cup \Sigma(A^-)$, 
	by \cite[Theorem 5.6]{Wang:nonnegative}.

	If we can now show that $f$ also has a representation as in \cref{eq:SymmRepresentation}, then the claim follows.
	For this, we need to explicitly re-write \cref{eq:SONCRepNoSym} in terms of a sum over $\sigma \in S_n$.
	Note that a similar statement is proven in \cite[Theorem 3.1]{Moustrou:Naumann:Riener:Theobald:Verdure:Symmetry} in the language of AGEs.
	Assume without loss of generality that $\# \Sigma(\Vector{\gamma}) = n!$ for all $\Vector{\gamma} \in A = A^+ \cup A^-$, that is, each permutation of an exponent vector results in a new term in our polynomial. 
	We can make this assumption since we could otherwise just refrain from simplifying the polynomial by combining circuit polynomials with the same support and still get $n!$ many circuit polynomials.

	Since $f$ is already symmetric, we can equivalently rewrite \cref{eq:SONCRepNoSym} to 
	\begin{align*}
		f(\xb) \ &= \ \sum_{\betab \in \Sigma(A^-)} \sum_{(C^+, \betab) \in \cC(\Sigma(A^+), \betab)} \sum_{\alpb \in C^+} c_{\alpb}^{(C^+, \betab)} \xb^{\alpb} + c_{\betab}^{(C^+, \betab)} \xb^{\betab} \\
		&= \ \frac{1}{n!} \sum_{\sigma \in S_n} \left( \sum_{\betab \in \Sigma(A^-)} \sum_{(C^+, \betab) \in \cC(\Sigma(A^+), \betab)} \sum_{\alpb \in C^+} c_{\alpb}^{(C^+, \betab)} \xb^{\sigma(\alpb)} + c_{\betab}^{(C^+, \betab)} \xb^{\sigma(\betab)} \right).
	\end{align*}
	If we define $f^{\sig}_{(C^+, \betab)} = \frac{1}{n!} \left(\sum_{\alpb \in C^+} c_{\alpb}^{(C^+, \betab)} \xb^{\sigma(\alpb)} + c_{\betab}^{(C^+, \betab)} \xb^{\sigma(\betab)}\right)$, then all $f^{\sig}_{(C^+, \betab)}$ are nonnegative circuit polynomials since the original $f_{(C^+, \betab)}$ from decomposition \cref{eq:SONCRepNoSym} are nonnegative circuit polynomials and permutation of the exponents preserves this property, see \cref{eq:2}. 
	Since these circuit polynomials are nonnegative if and only if the circuit number condition 
	\begin{align*}
		\left| c_{\betab}^{(C^+, \betab)} \right| \ \le \ \prod_{\alpb \in C^+} \left(\frac{c_{\alpb}^{(C^+, \betab)}}{\lam_{\alpb}^{(C^+, \betab)}}\right)^{\lam_{\alpb}^{(C^+, \betab)}}
	\end{align*} 
	holds, Condition (\cref{eq:CircuitCondition}) in \cref{corollary:SymmetricSONC} is satisfied and the claim follows.
\end{proof}

As a direct consequence, we obtain the following result about the symmetric closure of the SONC cone. 

\begin{corollary}
	Let $f(\xb) = \sum_{\alpb \in A^+} c_{\alpb} \xb^{\alpb} + \sum_{\betab \in A^+} c_{\betab} \xb^{\betab}$ be an arbitrary polynomial for some finite support set $A = A^+ \cup A^- \subset \N^n$. 
	Then $f$ is in $\signedSONC$ if and only if the corresponding symmetric polynomial 
	\begin{align}
		f_{\text{sym}}(\xb) \ = \ \sum_{\sigma \in S_n} \left(\sum_{\alpb \in A^+} c_{\alpb} \xb^{\sigma(\alpb)} + \sum_{\betab \in A^+} c_{\betab} \xb^{\sigma(\betab)} \right)
		\label{eq:SymmetrizedSONC}
	\end{align}
	is a SONC polynomial supported on $\Sigma(A^+) \ \cup \ \Sigma(A^-)$.
	\label{cor:SymmetricClosure}
\end{corollary}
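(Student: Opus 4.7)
The proof splits into the two implications. For the easier direction, $f \in \signedSONC \Rightarrow f_{\text{sym}} \in \SONC_{\Sigma(A^+), \Sigma(A^-)}$, I would observe that for every $\sigma \in S_n$ the assignment $h(\xb) \mapsto h^{\sigma}(\xb) := \sum_{\alpb} c_{\alpb} \xb^{\sigma(\alpb)}$ is merely a relabelling of variables, and hence sends each nonnegative circuit polynomial to a nonnegative circuit polynomial with the same circuit number, vertices $\sigma(A^+)$, and inner point $\sigma(\betab)$. So $f^{\sigma} \in \SONC_{\sigma(A^+), \sigma(A^-)}$, and because the SONC cone is closed under addition, $f_{\text{sym}} = \sum_{\sigma \in S_n} f^{\sigma}$ lies in $\SONC_{\Sigma(A^+), \Sigma(A^-)}$.

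For the reverse direction, I would apply \cref{theorem:MuirheadSONCEquivalence} to $f_{\text{sym}}$, which by hypothesis is a symmetric polynomial in $\SONC_{\Sigma(A^+), \Sigma(A^-)}$. This yields a representation
\begin{equation*}
f_{\text{sym}}(\xb) \ = \ \sum_{\sigma \in S_n} \sum_{\betab \in A^-} \sum_{(C^+, \betab) \in \cC(A^+, \betab)} f^{\sigma}_{(C^+, \betab)}(\xb)
\end{equation*}
in which every $f^{\sigma}_{(C^+, \betab)}$ is a nonnegative circuit polynomial (the circuit-number condition \cref{eq:CircuitCondition} being $\sigma$-independent). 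I would then isolate the $\sigma = \mathrm{id}$ block
\begin{equation*}
g(\xb) \ := \ \sum_{\betab \in A^-} \sum_{(C^+, \betab) \in \cC(A^+, \betab)} f^{\mathrm{id}}_{(C^+, \betab)}(\xb),
\end{equation*}
which is visibly a sum of nonnegative circuit polynomials with support in $A^+ \cup A^-$, hence in $\signedSONC$. The remaining task is to identify $g$ with $f$.

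To do so, I would reuse the WLOG reduction from the proof of \cref{theorem:MuirheadSONCEquivalence}: assume $\#\Sigma(\gamma) = n!$ for every $\gamma \in A$ and that the $S_n$-orbits of the elements of $A$ are pairwise disjoint. Under this genericity, the support of $f^{\sigma}_{(C^+, \betab)}$ for $\sigma \neq \mathrm{id}$ is contained in $(\Sigma(A^+) \cup \Sigma(A^-)) \setminus (A^+ \cup A^-)$, so projecting the representation onto the sub-support $A^+ \cup A^-$ retains exactly the $\sigma = \mathrm{id}$ terms and returns $g$ on the right-hand side; on the left-hand side the same projection applied to the defining formula for $f_{\text{sym}}$ returns $f$. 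Hence $g = f$ and $f \in \signedSONC$.

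I expect the main obstacle to lie in this last identification when the orbits are non-generic, because then distinct formal summands of $f_{\text{sym}}$ collapse into the same monomial and the clean "disjoint supports" picture fails. The fix is the one already employed in the proof of \cref{theorem:MuirheadSONCEquivalence}: do not combine like terms in the symmetrization, so that each of the $n!$ formal summands carries its own circuit representation; stabilizer multiplicities then get absorbed into the circuit coefficients without affecting the circuit-number condition, and the identification $g = f$ persists.
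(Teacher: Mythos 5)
Your easy direction is correct, and your hard direction is also what the paper's one-line proof must have in mind: apply \cref{theorem:MuirheadSONCEquivalence} to $f_{\text{sym}}$, extract the $\sigma=\mathrm{id}$ block $g$, which is visibly a sum of nonnegative circuits supported on $A^+\cup A^-$, and then identify $g$ with $f$. Under the genericity you impose (trivial stabilizers and pairwise distinct $S_n$-orbits among the elements of $A$), the identification is immediate and the argument is complete.

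The gap is in your last paragraph. You correctly flag the non-generic case as the obstacle, but the proposed fix (keep formal summands separate, absorb stabilizer multiplicities) cannot repair the step $g=f$, because the per-polynomial ``if'' direction of the corollary is genuinely false once two elements of $A^+$ (or of $A^-$) lie in the same orbit. A minimal counterexample: let $n=2$, $A^+=\{(2,0),(0,2)\}$, $A^-=\{(1,1)\}$, and $f(\xb)=x_1^2+\tfrac{1}{100}x_2^2-x_1x_2$. Then $\Theta_f=(2)^{1/2}(\tfrac{1}{50})^{1/2}=\tfrac{1}{5}$, so $|c_{\betab}|=1>\Theta_f$ and $f\notin\signedSONC$; yet $f_{\text{sym}}(\xb)=\tfrac{101}{100}(x_1^2+x_2^2)-2x_1x_2$ has circuit number $\tfrac{101}{50}>2=|c_{\betab}|$, so $f_{\text{sym}}\in\SONC_{\Sigma(A^+),\Sigma(A^-)}$. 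Once orbits collide, the map $f\mapsto f_{\text{sym}}$ is no longer injective, and some preimages of a symmetric SONC are not SONC. In this example your $g=\tfrac{1}{n!}f_{\text{sym}}$ is indeed in $\signedSONC$, but $g\ne f$, and \emph{no} element of $\signedSONC$ equals $f$. What does survive without the disjoint-orbit hypothesis is the set equality $\SONC_{\Sigma(A^+),\Sigma(A^-)}=\Sigma(\signedSONC)$ that the paper records in its closing remarks; for that statement your $\sigma=\mathrm{id}$ block $g$ is a perfectly good witness even though $g\ne f$. So either build the disjoint-orbit assumption explicitly into the hypothesis (as the paper implicitly does by taking $A$ to consist of orbit representatives), or weaken the conclusion of the reverse implication to this set-equality form; the bookkeeping fix alone will not produce $g=f$.
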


\begin{proof}
	This follows immediately from \cref{theorem:MuirheadSONCEquivalence}.
\end{proof}

\begin{example}
	Consider the polynomial 
	\begin{align*}
		f(\xb) \ = \ \frac{1}{2}x_1^4 + \frac{1}{2}x_2^4x_3^4 + \frac{1}{4}x_2^4x_3^8 - x_1x_2x_3 - x_1x_2^2x_3^3 + \frac{3}{4}
	\end{align*}
	with support $A^+ = \set{\Matrix{0\\0\\0}, \Matrix{4\\0\\0}, \Matrix{0\\4\\4}, \Matrix{0\\4\\8}}$ and $A^- = \set{\Matrix{1\\1\\1}, \Matrix{1\\2\\3}}$. 
	We show first that $f$ is contained in $\signedSONC$.
	For this, note that we can write $f$ as a sum of the circuit polynomials
	\begin{align*}
		f_{\smallMatrix{1\\1\\1}} \ &= \ \frac{1}{4}x_1^4 + \frac{1}{4}x_2^4x_3^4 - x_1x_2x_3 + \frac{1}{2} \ , \text{ and } \\
		f_{\smallMatrix{1\\2\\3}} \ &= \ \frac{1}{4}x_1^4 + \frac{1}{4}x_2^4x_3^4 + \frac{1}{4}x_2^4x_3^8 - x_1x_2^2x_3^3 + \frac{1}{4} \ .
	\end{align*}
	Both of these circuit polynomials are nonnegative since they each satisfy $c_{\alpb}^{\left(\betab\right)} = \lam_{\alpb}^{\left(\betab\right)}$ for all $\alpb \in A^+$ and thus the circuit number condition becomes
	\begin{align*}
		|c_{\betab}| \ = \ 1 \ \le \ \prod_{\alpb \in A^+} \left(\frac{c_{\alpb}^{\left(\betab\right)}}{\lam_{\alpb}^{\left(\betab\right)}}  \right) \ = \ \prod_{\alpb \in A^+} \left(\frac{\lam_{\alpb}^{\left(\betab\right)}}{\lam_{\alpb}^{\left(\betab\right)}}  \right) \ = \ 1 \quad \text{ for all } \betab \in A^- \ .
	\end{align*}
	By \cref{cor:SymmetricClosure}, this immediately yields that the symmetrized version of $f$ given by 
	\begin{align*}
		f_{\text{sym}}(\xb) \ &= \ \frac{1}{2}x_1^4 + \frac{1}{2}x_2^4 + \frac{1}{2}x_3^4 + \frac{1}{2}x_2^4x_3^4 + \frac{1}{2}x_1^4x_2^4 + \frac{1}{2}x_1^4x_3^4 + \frac{1}{4}x_2^4x_3^8 + \frac{1}{4}x_1^4x_2^8 + \frac{1}{4}x_1^4x_3^8 \\
		&- \ x_1x_2x_3 - x_1x_2^2x_3^3 - x_1^2x_2x_3^3 - x_1x_2^3x_3^2 - x_1^3x_2^2x_3 - x_1^2x_2^3x_3 - x_1^3x_2x_3^2 + \frac{3}{4}
	\end{align*}
	is also a nonnegative polynomial in the SONC cone.
\end{example}

To illustrate that our results can be seen as a somewhat simplified version of those presented in \cite[Section 4]{Moustrou:Naumann:Riener:Theobald:Verdure:Symmetry}, we restate \cite[Example 5.1]{Moustrou:Naumann:Riener:Theobald:Verdure:Symmetry} to match our terminology.

\begin{example}[{\cite[Example 5.1]{Moustrou:Naumann:Riener:Theobald:Verdure:Symmetry}}]
	Consider the support set $A = A^+ \cup A^-$, where 
	\begin{align*}
		A^+ \ = \ \set{\Matrix{0\\0\\0}, \Matrix{7\\0\\0}, \Matrix{0\\7\\0}, \Matrix{0\\0\\7}} \ , \text{ and } \ 
		A^- \ = \ \set{\Matrix{1\\1\\2}, \Matrix{1\\2\\1}, \Matrix{2\\1\\1}, \Matrix{2\\2\\2}} \ .
	\end{align*}
	In order to satisfy the condition $A^+ \subset (2\N)^n$, or equivalently $\xb \in \R^n_{\ge 0}$, let $f$ be a symmetric polynomial on $A$ in the variables $\yb = (y_1, \ldots, y_n) = (x_1^2, \ldots, x_n^2)$. 
	Then we can write $f$ as 
	\begin{align}
		\begin{split}
			f(\yb) \ &= \ \sum_{\sig \in S_3} \left[ \left( c_{(000)}^{(1)} + c_{(700)}^{(1)} \yb^{\sig(700)} + c_{(070)}^{(1)} \yb^{\sig(070)} + c_{(007)}^{(1)} \yb^{\sig(007)} + c_{(112)} \yb^{\sig(112)}\right) \right. \\
			& \qquad \ \ + \left. \left( c_{(000)}^{(2)} + c_{(700)}^{(2)} \yb^{\sig(700)} + c_{(070)}^{(2)} \yb^{\sig(070)} + c_{(007)}^{(2)} \yb^{\sig(007)} + c_{(222)} \yb^{\sig(222)} \right)\right]
		\end{split}
	\label{eq:ExampleSymmetricSONC}
	\end{align}
	for some coefficient vector $\Vector{c} \in \R^A$ satisfying $c_{\alpb}^{(1)} + c_{\alpb}^{(2)} = c_{\alpb}$ for all exponents $\alpb \in A^+$. 
	\cref{corollary:SymmetricSONC} now tells us that instead of verifying the circuit number condition \cref{eq:CircuitCondition} for each of these circuit functions, it is sufficient to check them for one arbitrary permutation $\sig \in S_3$. 
	I.e., we would usually need to check the nonnegativity of four circuit polynomials ($12$ if we do not simplify the representation in \cref{eq:ExampleSymmetricSONC} by combining circuit functions with equal support). 
	By exploiting symmetries, we only need to compute two circuit numbers.
\end{example}

\section{Outlook}

First, we point out that the results presented in this article can be restated in the language of AGE functions without much difficulty, if this notation is preferred.
Following the ``SONC = SAGE'' result in \cite{Wang:nonnegative,Murray:Chandrasekaran:Wierman:NewtonPolytopes} it is clear that an analogue statement has to hold.

An interesting line of future research would be adapting our statements for the \struc{DSONC cone} -- a subcone of the SONC cone that was recently introduced in \cite{Heuer:deWolff:DualityOfSONC}. 
The DSONC cone is derived from the dual SONC cone, which has been studied in \cite{Dressler:Naumann:Theobald:DualSONC,Dressler:Heuer:Naumann:deWolff:DualSONCLP}, and inherits numerous structural properties of the original SONC cone.

We close the article with a few words on \struc{closure operators} applied to the SONC cone $\ASONC$. The cone of nonnegative polynomials is closed under various operations such as multiplication of two polynomials or a transformation of variables. The same holds for sums of squares, that is, the product of two sums of squares is a sum of squares, and if $f(\xb)$ is a sums of squares, the same will hold for $f(A\xb)$ for a matrix $A$. However, for the SONC cone, the situation is different: it is not closed under multiplication \cite{Dressler:Iliman:deWolff:Positivstellensatz}, and it is not closed under transformation of variables \cite{Dressler:Kurpisz:deWolff:Hypercube,Kurpisz:deWolff:Hierarchies}.
Indeed, this is not a shortcoming, but an opportunity to obtain generalized certificates of nonnegativity of the form $\cO(\ASONC)$ given by the closure of a suitable operator $\cO$ applied to the SONC cone $\ASONC$ (or another suitable cone of certificates of nonnegativity).
The very same approach was carried out by Ahmadi and Hall in \cite{Ahmadi:Hall:BasisPursuit} in the context of variable transformations applied to SDSOS certificates, which are particular SOS polynomials that happen to be also SONC; see also \cite{Ahmadi:Majumdar:DSOSandSDSOS,Kurpisz:deWolff:Hierarchies}.

The results of this paper can be seen as a contribution in this broader context of applying closure operators to suitable cones of certificates of nonnegativity: If we define an operator $\struc{\Sigma(\cdot)}$ for \struc{symmetric closure} of a cone (or, more generally, a set) of certificates of nonnegativity, that is, in the context of SONC
\begin{align*}
	\struc{\Sigma(\signedSONC)} \ := \ \set{f_{\text{sym}}(\xb) \text{ as in \cref{eq:SymmetrizedSONC}} \ : \ f(\xb) \in \signedSONC} \ ,
\end{align*}
then \cref{cor:SymmetricClosure} can be stated as
\begin{align*}
	\SONC_{\Sigma(A^+), \Sigma(A^-)} \ = \ \Sigma(\signedSONC) \ .
\end{align*}

\bibliographystyle{amsalpha}
\bibliography{gen_muirhead}

\newcommand{\etalchar}[1]{$^{#1}$}
\providecommand{\bysame}{\leavevmode\hbox to3em{\hrulefill}\thinspace}
\providecommand{\MR}{\relax\ifhmode\unskip\space\fi MR }
\providecommand{\MRhref}[2]{%
  \href{http://www.ams.org/mathscinet-getitem?mr=#1}{#2}
}
\providecommand{\href}[2]{#2}
\begin{thebibliography}{DHNdW20}

\bibitem[AH17]{Ahmadi:Hall:BasisPursuit}
Amir~Ali Ahmadi and Georgina Hall, \emph{Sum of squares basis pursuit with
  linear and second order cone programming}, Algebraic and geometric methods in
  discrete mathematics \textbf{685} (2017), 27--53.

\bibitem[AM14]{Ahmadi:Majumdar:DSOSandSDSOS}
Amir~Ali Ahmadi and Anirudha Majumdar, \emph{{DSOS} and {SDSOS} optimization:
  {LP} and {SOCP}-based alternatives to sum of squares optimization}, 2014 48th
  annual conference on information sciences and systems (CISS), IEEE, 2014,
  pp.~1--5.

\bibitem[BPT12]{Blekherman:Parrilo:Thomas}
Grigoriy Blekherman, Pablo~A Parrilo, and Rekha~R Thomas, \emph{Semidefinite
  optimization and convex algebraic geometry}, SIAM, 2012.

\bibitem[CGS11]{Cuttler:Greene:Skandera}
Allison Cuttler, Curtis Greene, and Mark Skandera, \emph{Inequalities for
  symmetric means}, European Journal of Combinatorics \textbf{32} (2011),
  no.~6, 745--761.

\bibitem[CS16]{Chandrasekaran:Shah:SAGE-REP}
Venkat Chandrasekaran and Parikshit Shah, \emph{Relative entropy relaxations
  for signomial optimization}, SIAM Journal on Optimization \textbf{26} (2016),
  no.~2, 1147--1173.

\bibitem[DHNdW20]{Dressler:Heuer:Naumann:deWolff:DualSONCLP}
Mareike Dressler, Janin Heuer, Helen Naumann, and Timo de~Wolff, \emph{Global
  optimization via the dual {SONC} cone and linear programming}, Proceedings of
  the 45th International Symposium on Symbolic and Algebraic Computation, 2020,
  pp.~138--145.

\bibitem[DIdW17]{Dressler:Iliman:deWolff:Positivstellensatz}
Mareike Dressler, Sadik Iliman, and Timo de~Wolff, \emph{A {P}ositivstellensatz
  for sums of nonnegative circuit polynomials}, SIAM Journal on Applied Algebra
  and Geometry \textbf{1} (2017), no.~1, 536--555.

\bibitem[DKdW21]{Dressler:Kurpisz:deWolff:Hypercube}
Mareike Dressler, Adam Kurpisz, and Timo de~Wolff, \emph{Optimization over the
  boolean hypercube via sums of nonnegative circuit polynomials}, Foundations
  of Computational Mathematics (2021), 1--23.

\bibitem[DNT21]{Dressler:Naumann:Theobald:DualSONC}
Mareike Dressler, Helen Naumann, and Thorsten Theobald, \emph{The dual cone of
  sums of non-negative circuit polynomials}, Advances in Geometry \textbf{21}
  (2021), no.~2, 227--236.

\bibitem[FdW22]{Forsgaard:deWolff:BoundarySONCcone}
Jens Forsg{\aa}rd and Timo de~Wolff, \emph{The algebraic boundary of the
  sonc-cone}, SIAM J. Appl. Algebra Geom. \textbf{6} (2022), no.~3, 468--502.

\bibitem[FK11]{Fidalgo:Kovacec}
Carla Fidalgo and Alexander Kovacec, \emph{Positive semidefinite diagonal minus
  tail forms are sums of squares}, Math. Z. \textbf{269} (2011), no.~3-4,
  629--645.

\bibitem[FKdWY20]{Feliu:Kaihnsa:Yueruek:deWolff:Multistationarity}
Elisenda Feliu, Nidhi Kaihnsa, Timo de~Wolff, and O{\u{g}}uzhan Y{\"u}r{\"u}k,
  \emph{The kinetic space of multistationarity in dual phosphorylation},
  Journal of Dynamics and Differential Equations (2020), 1--28.

\bibitem[FKdWY22]{Feliu:Kaihnsa:Yueruek:deWolff:Multistationarity:2}
\bysame, \emph{Parameter region for multistationarity in n-site phosphorylation
  networks}, 2022, Preprint; see {\sf arXiv:2206.08908}.

\bibitem[GP04]{Gaterman:Parrilo}
Karin Gatermann and Pablo~A. Parrilo, \emph{Symmetry groups, semidefinite
  programs, and sums of squares}, Journal of Pure and Applied Algebra
  \textbf{192} (2004), no.~1, 95--128.

\bibitem[HdW22]{Heuer:deWolff:DualityOfSONC}
Janin Heuer and Timo {de}~{Wolff}, \emph{The duality of {SONC}: Advances in
  circuit-based certificates}, 2022.

\bibitem[Hil88]{Hilbert:Seminal}
David Hilbert, \emph{{{\"U}ber die Darstellung definiter Formen als Summe von
  Formenquadraten}}, {Mathematische Annalen} \textbf{{32}} ({1888}),
  {342--350}.

\bibitem[HLP88]{Hardy:Littlewood:Polya:Inequalities}
Godfrey~Harold Hardy, John~Edensor Littlewood, and P{\'o}lya,
  \emph{Inequalities.}, 2nd ed., 1st. paperback ed ed., Camb. Math. Libr.,
  Cambridge (UK) etc.: Cambridge University Press, 1988.

\bibitem[IdW16a]{Iliman:deWolff:Circuits}
Sadik Iliman and Timo de~Wolff, \emph{Amoebas, nonnegative polynomials and sums
  of squares supported on circuits}, Research in the Mathematical Sciences
  \textbf{3} (2016), no.~1, 1--35.

\bibitem[IdW16b]{Iliman:deWolff:FirstGP}
\bysame, \emph{Lower bounds for polynomials with simplex newton polytopes based
  on geometric programming}, SIAM Journal on Optimization \textbf{26} (2016),
  no.~2, 1128--1146.

\bibitem[KdW19]{Kurpisz:deWolff:Hierarchies}
Adam Kurpisz and Timo de~Wolff, \emph{New dependencies of hierarchies in
  polynomial optimization}, Proceedings of the 2019 on International Symposium
  on Symbolic and Algebraic Computation, 2019, pp.~251--258.

\bibitem[KKT21]{Kato:Kosuda:Tokushige}
Mitsuo Kato, Masashi Kosuda, and Norihide Tokushige, \emph{Extending
  {M}uirhead's inequality}, Graphs and Combinatorics \textbf{37} (2021), no.~5,
  1923--1941.

\bibitem[Las09]{Lasserre:BookMomentsApplications}
Jean~B Lasserre, \emph{Moments, positive polynomials and their applications},
  vol.~1, World Scientific, 2009.

\bibitem[Las15]{Lasserre:IntroductionPolynomialandSemiAlgebraicOptimization}
\bysame, \emph{An introduction to polynomial and semi-algebraic optimization},
  vol.~52, Cambridge University Press, 2015.

\bibitem[Lau09]{Laurent:Survey}
Monique Laurent, \emph{Sums of squares, moment matrices and optimization over
  polynomials}, Emerging applications of algebraic geometry, Springer, 2009,
  pp.~157--270.

\bibitem[MCW21a]{Murray:Chandrasekaran:Wierman:NewtonPolytopes}
Riley Murray, Venkat Chandrasekaran, and Adam Wierman, \emph{Newton polytopes
  and relative entropy optimization}, Foundations of Computational Mathematics
  \textbf{21} (2021), no.~6, 1703--1737.

\bibitem[MCW21b]{Murray:Chandrasekaran:Wierman:SigOptREP}
\bysame, \emph{Signomial and polynomial optimization via relative entropy and
  partial dualization}, Mathematical Programming Computation \textbf{13}
  (2021), no.~2, 257--295.

\bibitem[MKNT08]{Mevissen:Solving}
Martin Mevissen, Masakazu Kojima, Jiawang Nie, and Nobuki Takayama,
  \emph{Solving partial differential equations via sparse {SDP} relaxations},
  Pacific Journal of Optimization \textbf{4} (2008), no.~2, 213--241.

\bibitem[MNR{\etalchar{+}}22]{Moustrou:Naumann:Riener:Theobald:Verdure:Symmetry}
Philippe Moustrou, Helen Naumann, Cordian Riener, Thorsten Theobald, and Hugues
  Verdure, \emph{Symmetry reduction in {AM}/{GM}-based optimization}, SIAM
  Journal on Optimization \textbf{32} (2022), no.~2, 765--785.

\bibitem[Mui03]{Muirhead}
Robert~Franklin Muirhead, \emph{Some {M}ethods {A}pplicable to {I}dentities and
  {I}nequalities of {S}ymmetric {A}lgebraic {F}unctions of {{\(n\)}}
  {L}etters.}, Proceedings of the Edinburgh Mathematical Society \textbf{21}
  (1903), 143--157.

\bibitem[MWHL20]{Marx:Moment}
Swann Marx, Tillmann Weisser, Didier Henrion, and Jean~B Lasserre, \emph{A
  moment approach for entropy solutions to nonlinear hyperbolic pdes},
  Mathematical Control \& Related Fields \textbf{10} (2020), no.~1, 113.

\bibitem[Oxl11]{Oxley:MatroidTheory}
James~G Oxley, \emph{Matroid theory}, Oxford Graduate Texts in Mathematics,
  vol.~2, Oxford University Press, 2011.

\bibitem[Par00]{Parrilo:Thesis}
Pablo~A Parrilo, \emph{Structured semidefinite programs and semialgebraic
  geometry methods in robustness and optimization}, Ph.D. thesis, California
  Institute of Technology, 2000.

\bibitem[PKC12]{Craciun:Koeppl:Pantea:GlobalInjectivity}
Casian Pantea, Heinz Koeppl, and Gheorghe Craciun, \emph{Global injectivity and
  multiple equilibria in uni-and bi-molecular reaction networks}, Discrete \&
  Continuous Dynamical Systems-B \textbf{17} (2012), no.~6, 2153.

\bibitem[PP02]{PapachristodoulouPrajna:LyapunovSOS}
Antonis Papachristodoulou and Stephen Prajna, \emph{On the construction of
  lyapunov functions using the sum of squares decomposition}, Proceedings of
  the 41st IEEE Conference on Decision and Control, 2002., vol.~3, 2002,
  pp.~3482--3487.

\bibitem[Rez89]{Reznick:AGI}
Bruce Reznick, \emph{{Forms Derived from the Arithmetic-Geometric Inequality}},
  {Mathematische Annalen} \textbf{{283}} (1989), {431--464}.

\bibitem[RSST18]{Raymond:Saunderson:Singh:Thomas}
Annie Raymond, James Saunderson, Mohit Singh, and Rekha~R Thomas,
  \emph{Symmetric sums of squares over k-subset hypercubes}, Mathematical
  Programming \textbf{167} (2018), no.~2, 315--354.

\bibitem[RTAL13]{Riener:Theobald:Andren:Lasserre}
Cordian Riener, Thorsten Theobald, Lina~Jansson Andr{\'e}n, and Jean~B
  Lasserre, \emph{Exploiting symmetries in {SDP}-relaxations for polynomial
  optimization}, Mathematics of Operations Research \textbf{38} (2013), no.~1,
  122--141.

\bibitem[Ste04]{Steele:CauchySchwarzMasterClass}
J.~Michael Steele, \emph{The {C}auchy-{S}chwarz master class}, AMS/MAA Problem
  Books Series, Mathematical Association of America, Washington, DC; Cambridge
  University Press, Cambridge, 2004, An introduction to the art of mathematical
  inequalities.

\bibitem[Wan22]{Wang:nonnegative}
Jie Wang, \emph{Nonnegative polynomials and circuit polynomials}, SIAM Journal
  on Applied Algebra and Geometry \textbf{6} (2022), no.~2, 111--133.

\end{thebibliography}

\end{document}